\documentclass[review]{elsarticle}

\usepackage{lineno,hyperref}
\usepackage{amsmath, amsfonts}
\usepackage{graphicx,graphbox}
\usepackage[counterclockwise, figuresleft]{rotating}
\usepackage{longtable} 
\usepackage{amsmath}
\usepackage{amssymb}
\usepackage{amsthm}
\usepackage{natbib}
\usepackage{float}
\usepackage{graphics}
\usepackage{xcolor}
\usepackage{enumerate} 
\usepackage[shortlabels]{enumitem}
\usepackage{xcolor}
\usepackage{makecell}
\usepackage{subfig} 

\hypersetup{pdfauthor={Name}}

\hypersetup{
	colorlinks,
	linkcolor={red!50!black},
	citecolor={blue!50!black},
	urlcolor={blue!80!black}
}

\newcommand{\ben}{\begin{enumerate}}
\newcommand{\een}{\end{enumerate}}
\def\bco{\iffalse}
\newtheorem{thm}{Theorem}
\newcommand{\bthm}{\begin{thm}}
\newcommand{\ethm}{\end{thm}}
\newtheorem{prop}{Proposition}

\newtheorem{cor}{Corollary}
{
	\theoremstyle{remark}
	\newtheorem{rem}{Remark}

}

\usepackage{titlesec}

\newcommand\unappendix{
	\titleformat{\section}{\centering\bf\large}{}{0em}{}
}

\def \Lp {\mathcal{L}_p}

\input{macros_ychen}
\modulolinenumbers[5]

\journal{Journal of Mathematical Analysis and Applications}







\bibliographystyle{model5-names}\biboptions{authoryear}


\begin{document}

\begin{frontmatter}

\title{Learning Delay Dynamics for Multivariate  Stochastic Processes, With Application to the Prediction of the Growth Rate of  COVID-19 Cases in the United States\footnote{All authors declare no financial or other competing interests. The data for this study was accessed from \url{https://github.com/OpportunityInsights/EconomicTracker} \citep{economicTracker} on September $19$, $2020$. The authors are not responsible for collecting the data, declare no financial interests  and maintain ethical neutrality.}}

\author[1]{\normalsize Paromita Dubey}
\author[2]{Yaqing Chen\footnote{Contributed equally to the paper.}}
\newcommand\CoAuthorMark{\footnotemark[\arabic{footnote}]}
\author[2]{\'Alvaro Gajardo\protect\CoAuthorMark}
\author[2]{Satarupa~Bhattacharjee\protect\CoAuthorMark}
\author[2]{Cody Carroll\protect\CoAuthorMark}
\author[2]{Yidong Zhou\protect\CoAuthorMark}
\author[2]{Han Chen\protect\CoAuthorMark}
\author[2]{Hans-Georg M\"uller\footnote{Corresponding author. Email address: hgmueller@ucdavis.edu. Research supported in part by NSF Grant DMS-201426.}}
\address[1]{\normalsize Department of Data Sciences and Operations,\\ USC Marshall School of Business}
\address[2]{\normalsize Department of Statistics, University of California, Davis}




\begin{abstract}
Delay differential equations form the underpinning of many complex dynamical systems. The forward problem of solving random differential equations with delay has received increasing attention in recent years. Motivated by the challenge to predict the COVID-19 caseload trajectories for individual states in the U.S., we target here the inverse problem. Given a sample of observed random trajectories obeying an unknown random differential equation model with delay, we use a functional data analysis framework to learn the model parameters that govern the underlying dynamics from the data. We show existence and uniqueness of the analytical solutions of the population delay random differential equation model when one has discrete time delays in the functional concurrent regression model and also for a second scenario where one has a delay continuum or distributed delay. The latter involves a functional linear regression model with history index. The derivative of the process of interest is modeled using the process itself as predictor and also other functional predictors with predictor-specific delayed impacts. This dynamics learning approach is shown to be well suited to model the growth rate of COVID-19 for the states that are part of the U.S., by pooling information from the individual states, using the case process and concurrently observed economic and mobility data as predictors.

\end{abstract}

\begin{keyword}
Random delay differential equation \sep Functional data analysis  \sep History index model \sep Time dynamics \sep Stochastic process \sep Economic activity.
\end{keyword}

\end{frontmatter}


\section{Introduction}

The modeling of time dynamical systems is of interest in  multiple scientific fields. While ordinary differential equations (ODE) have a  long history, interest in delay differential equations (DDE) is more recent, albeit both  have been extensively studied \citep{codd:55, weis:67,ablo:75,driv:12,asl:03}. A  DDE is a natural extension of an ODE when observed processes have an aftereffect. Unlike the situation for ODEs, the solution of a DDE depends not only on the initial state but on the entire history of the process in a time interval of length equal to the delay prior to the initial time point. 

Moving beyond the basic notion of a deterministic ODE,  random differential equations (RDE) \citep{stra:70,soon:73, cort:07, neck:13} and stochastic differential equations (SDE) \citep{arno:74} are used to accommodate probabilistic uncertainty in temporal stochastic processes. In this paper we focus on RDEs, where the random effects are manifested in the model parameters. These include coefficients, initial conditions and forcing terms that are typically smooth in time, leading  to differentiable sample path solutions of the RDE, in contrast to the situation for SDEs,  which have a non-differentiable forcing term, typically a Wiener process, and therefore require stochastic calculus \citep{ito:51}.

Extensive developments in the theory of the forward problem of obtaining solutions for RDEs stand in contrast with statistical approaches, which include data-oriented methodology for the inverse problem, that is, learning the nature of the  differential equation from data.  This approach has been referred to as empirical dynamics \citep{mull:10}. The starting point is a sample of random trajectories that are viewed as independent and identically distributed (i.i.d.) realizations of an underlying smooth (continuously differentiable) stochastic process. For a smooth stochastic process $X(t)$, there always exists a function $f$ such that
\begin{align}
    E(X'(t)|X(t))&=f(t,X(t)) \\ X'(t)&=f(t,X(t))+Z(t) \label{ede}
\end{align}
with $E(Z(t)|X(t))=0$ almost surely. This forms the basis of empirical dynamics and  has led to both parametric and nonparametric modeling approaches for $f$ \citep{zhu:11,verz:12}, which can be characterized as dynamics learning from data. Due to the minimal assumptions, dynamics learning covers a large number of specific ODEs that do not need to be a priori specified, however it  requires that a sample repeated realizations of the underlying stochastic process is available. Such samples form the backbone of  functional data analysis \citep{rams:05:1,jlwang:16}. A pertinent  example is provided by COVID-19 caseload trajectories, where one observes samples of such trajectories across geographic units such as states or countries \citep{carr:20}. 

Related but quite distinct statistical inference concerns the 
problem of identifying the parameters of an a priori  specified dynamic system \citep{brun:08,li:05,chen:08}, usually just from one or very few realizations. Here the trajectories are considered non-stochastic but are typically  assumed to have been measured with noise. This  leads to a curve fitting problem that in simple cases can be addressed with nonlinear least squares, but for which also  nonparametric and semiparametric statistical methods have been employed \citep{lian:08, paul:11}. A key difference is that in dynamics learning one requires data that can be considered as an independent and identically distributed sample of realizations of an underlying smooth stochastic process $X_i(t), i=1,\dots,n$, while the parameter identification problem is often addressed
given (noisy) data from one trajectory.  The  modeling approach that we introduce here follows the paradigm of dynamics learning, albeit in a somewhat more structured framework than empirical dynamics. 

The general form of DDEs for vector functions is 
\begin{equation*}
    \frac{d}{dt}\mathbf{x}(t) = f(t,\mathbf{x}(t),\mathbf{x}_\tau(t)),
\end{equation*}
where $\mathbf{x}(t)$ is the function value or state  at time $t$ and $\mathbf{x}_\tau(t)$ can be either a vector of the states evaluated at discrete time delays $\tau_i \ge 0,  \, i=1,...,m$, so that $\mathbf{x}_\tau(t) = (x(t-\tau_0),x(t-\tau_1),\dots,x(t-\tau_m))$, or
alternatively an integral of the trajectory $\mathbf{x}(\cdot)$ over a past period, representing a continuum of delays often called distributed delays \citep{bell:13, jaco:93, bjor:03, elna:89, mehr:14, cara:18,bere:01,gurn:80,aziz:16}. Results on existence and uniqueness of random differential equations with delay (RDED) are relatively recent, where  \cite{cala:19} studied conditions for $\mathcal{L}_p$ existence and uniqueness of the solutions of a RDED and  \cite{cort:20} the specific case of a linear RDED with a forcing term.


In this paper we propose dynamics learning from a sample of multivariate functional data, where the model generating the observed derivative trajectories is assumed to be a RDE with a delay component. The randomness in the DE is included in the forcing term, part of which is explained by additional covariates. These  are stochastic processes with individual delay components, which in the motivating  COVID-19 application correspond to trajectories of mobility and  economic activity. The  unexplained remainder is a drift process, which 
appears as  $Z(t)$  in equation \eqref{ede}.   

We utilize tools from the theory of RDE to provide a foundation for the proposed population models and   establish that all the models described in Section \ref{sec: model} correspond to RDED with unique solutions, which depend on the model parameters. The goal is then to learn these unknown parameters, which govern the dynamics encapsulated in the population RDED, by pooling information across the sample of observed trajectories. The presence of  a large number of covariates and delay components to choose from gives rise to the challenge of model selection. To address this we propose an initial pruning step as described in Section \ref{sec:varSelect} followed by a backfitting step to optimize over a set of delay components as outlined in Section \ref{sec:backfitting}. The  proposed approach differs from existing optimization strategies  \citep{wang:12, zhou:16,mehr:14,wang:20} and statistical methodology  \citep{jarn:17}
for parameter estimation that has been previously deployed  to identify a DDE based on noisy data observed for  a single non-stochastic trajectory. To our knowledge, 
dynamics learning where one has samples of stochastic processes has so far not been explored for the case of an underlying  RDED, even for the case of a one-dimensional stochastic process. We furthermore demonstrate in this paper that such an approach is well suited for modeling the growth rates of COVID-19.

\bco
Starting with a vector of co-evolving stochastic processes, we borrow information across the sample of observed trajectories to learn the dynamics encapsulated in the population RDED. The  proposed approach differs fundamentally from the existing optimization   \citep{wang:12, zhou:16,mehr:14,wang:20} and statistical methodology  \citep{jarn:17}
for parameter estimation that has been previously deployed  to identify a DDE based on noisy data observed for  a single non-stochastic trajectory. To our knowledge, 
dynamics learning where one has samples of stochastic processes has so far not not been explored for the case of an underlying  RDED, even for the case for a one-dimensional stochastic process. We furthermore demonstrate below that such an approach is well suited for modeling the growth rates of COVID-19. 
\fi

We consider both discrete and distributed delay models and establish existence and uniqueness of the solutions in the $\Lp$ sense \citep{cala:19} in Section \ref{sec: solutions}.  For the case of  discrete delays we harness  functional concurrent regression models \citep{cai:00,huan:04,sent:08,sent:11,huan:04} and for distributed delays  the functional history-index model \citep{malf:03,sent:10}, which incorporates a range of recent past values of the process. 
For dynamics learning of RDEDs, we  adopt a two stage procedure, where we first utilize functional linear regression  \citep{card:99,yao:05,morr:15} with history index to learn the distributed delay, where the regression parameter function then  corresponds to a  history index function for the process of interest. In a second step the resulting linear predictor, which is the inner product of the history index function and the predictor process of interest, is  used as a  predictor along with additional covariate processes with covariate-specific delays in a concurrent model to  fit the derivative process. 

We apply this model to predict the time-dynamic growth rate of COVID-19 for individual states in the United States, where the sample of all states and their case trajectories provides the sample of trajectories that is the starting point for the proposed dynamics learning. Predictor processes that we consider include the cumulative case (caseload) process, daily economic indicators and changes in mobility patterns. Modeling  the time evolution dynamics of the COVID-19 pandemic is of great importance to understand and interpret the underlying associations as well as for deploying resources and formulating policies in the face of great  uncertainty \citep{bret:20,bert:20, hao:20}. The proposed methodology is also of interest to assess the dynamics of many other empirically observed complex multivariate stochastic processes for which samples of observed trajectories are available.  We show by means of leave-one-out predictions that by employing dynamics learning for  the proposed RDED model with  delay components one obtains  considerably more accurate time-dynamic growth rate predictions  for COVID-19 caseload curves compared to  models without delays, demonstrating the importance of considering the inclusion of lags when modeling the growth rate of COVID-19.

\section{Proposed Models}

\label{sec: model}

\noindent Let $\left(X(\cdot),\mathbf{U}(\cdot)\right)$ denote a multivariate stochastic process where $X(\cdot)$ is a continuously differentiable  process of interest, $\mathbf{U}(\cdot)=(U_1(\cdot), \dots, U_J(\cdot))^\intercal$  is a vector function of  additional covariates, 
and $[t_0,T]$ is a time window of interest.    Consider a  RDED with discrete delays, 
\begin{align}
\label{CONC:DEF}
\frac{dX(t)}{dt} &= \alpha(t)+ \beta_0(t)X(t-\tau_0)+ \sum_{j=1}^J\beta_{j}(t)U_j(t-\tau_{j})+Z(t) \nonumber \\
:&=f(X(t-\tau_0),t),  \quad t \in [t_0,T], \nonumber \\
X(t) &= g(t) ,\ t \in [t_0-\tau_0, t_0],
\end{align}
where $g$ corresponds to the initial condition stochastic process, the  $\tau_j, \, j=0,\dots,J,$ are discrete delays, and $\alpha(t),\beta_0(t),\beta_j(t)$ are smooth functions whose regularity will be specified below in Section~\ref{sec: solutions}. In the above, $Z(\cdot)$ is a random drift process that is  independent of $\left(X(\cdot),\mathbf{U}(\cdot)\right)$. 

While the inclusion of discrete delays is an extension of the classical functional concurrent regression model \citep{rams:05:1}, the functional linear regression model with history index \citep{malf:03,sent:10} might be a better choice when the derivative trajectories depend not only on the predictor process value at a single past instant but on the entire continuum in the recent past. We model these distributed delays as  
\begin{align}
\label{histIndex1}
\frac{dX(t)}{dt} &= \alpha(t) +\int_0^{\tau_0}\gamma(s,t) X(t-s)\ ds \nonumber \\ &  + \int_0^{\tau_1} \gamma_1(s,t) U(t-s)\  ds +Z(t), \quad t \in [t_0,T] \nonumber \\ X(t) &= g(t), \ t\in[t_0-\tau_0,t_0],
\end{align}
where $g$ again  is an initial condition process. For the purpose of illustration and technical derivations, we assume that  $U(\cdot)$ is a univariate process in \eqref{histIndex1}; the corresponding multivariate generalization is straightforward.

\bco 
\begin{align}
\label{histIndex2}
\frac{dX(t)}{dt} &= \alpha(t) +
  \int_0^{\tau_0}\gamma(s,t)X(t-s)ds  \nonumber \\ &  + \sum_{j=1}^J\int_0^{\tau_{j}}\gamma_{j}(s,t)U_j(t-s)ds+Z(t)  \quad t \in [t_0,T] \nonumber\\ 
  X(t) &=  g(t), \ t\in[t_0-\tau_0,t_0].
\end{align}
In \eqref{CONC:DEF}, $\beta_j(\cdot)$, $j=0, \dots, J$, are the varying coefficient functions and in \eqref{histIndex2}, $\gamma(\cdot), \gamma_j(\cdot)$, $j=1, \dots, J$, are the history index coefficient functions, and $\tau_j$, $j=0, \dots, J$, are the corresponding lags. 
In the absence of covariates, the  model
\bal
\label{eq:history-nocov}
X'(t) = & \alpha(t) +\int_0^{\tau_0}\gamma(s,t) X(t-s)\ ds +Z(t), \ t \in [t_0,T],
\\ X(t)= & g(t), \quad t\in[t_0-\tau_0,t_0]
\eal
emerges as  a special case of equation \eqref{histIndex2}.

When the coefficient functions $\gamma_j(s,t)$ in \eqref{histIndex2} are separable in $s$ and $t$, one obtains a  simplification of \eqref{histIndex2}, known as the functional varying coefficient model with history index \citep{sent:10}
\begin{align}
\label{histIndex3}
 X'(t) &=\alpha(t) +
 \beta_0(t)\int_0^{\tau_0}\zeta_0(u)X(t-u)du \nonumber \\ & + \sum_{j=1}^J\beta_{j}(t)\int_0^{\tau_{j}}\zeta_{j}(u)U_j(t-u)du+Z(t), \quad t \in [t_0,T] \nonumber\\ X(t) &= g(t), \quad t\in[t_0-\tau_0,t_0],
\end{align}
where the history index function $\zeta_j(\cdot)$ in \eqref{histIndex3} defines the history index factor at $\beta_j(t),\ j = 0,\dots,J$ , by quantifying the influence of the recent history of the predictor values on the response. The varying coefficient function $\beta_j(t)$ represents the magnitude of this influence as a function of time. The functions $\zeta_j(\cdot), \beta_j(\cdot)$ and the intercept $\alpha(\cdot)$ are assumed to be smooth. For identifiability of model parameters, we assume that $\zeta_j(\cdot)$ is normalized by requiring that $\lVert\zeta_j\rVert_{L^2[0,\tau_j]}^2=\int_0^{\tau_j} \zeta^2_j(u) du = 1$ and, without loss of generality, $\zeta_j(0)>0$. Once $\zeta_j(\cdot)$ has been estimated, \eqref{histIndex3} reduces to a regular varying coefficient model. Due to its parsimonious nature and interpretability, model \eqref{histIndex3} has important applications in the biological and social sciences \citep{chio:12, hase:17}. 

\fi

For  COVID-19 modeling a hybrid model that combines distributed delay in $X(\cdot)$ with discrete delays for all other predictors $U_j(\cdot)$ turned out to be particularly suitable. In this practically relevant variant of a RDED one postulates that for  all $t \in [t_0,T]$,
\begin{align}
\label{eq:fcreg}
\case'(\tm) = \intc(\tm) + {\int_0^{\lag_{0}}\hislope(\tm,\vtm)\case(\tm-\vtm)\diffop\vtm} + \sum_{\pidx = 1}^{\nprdt} \slope_{\pidx}(\tm) \prdt_{\pidx}(\tm-\lag_{\pidx}) + Z(\tm),
\end{align}
{with an initial condition $X(t) = g(t), \ t\in[t_0-\tau_0,t_0].$}

We remark here that the RDED models proposed above are linear in the model parameters, although general RDED models determined by nonlinear functions are possible and might provide greater modeling flexibility, at the cost of additional modeling complexity. For example, it is not difficult to develop population models for quadratic and polynomial versions, in analogy to functional polynomial models \citep{mull:10:3}. Modeling with linear RDEDs  has the advantages that such models are easy to apply and implement, come with excellent interpretability of the model parameters, and exhibit good empirical performance in our application.  For the rest of the manuscript we therefore only consider linear RDEDs and leave the development application of  nonlinear RDEDs for 
future research.

\section{Existence and Uniqueness of Solutions}
\label{sec: solutions}

\subsection{Functional concurrent model with discrete delays} 

\noindent Here we discuss the existence and uniqueness of the solutions of the RDEDs corresponding to the concurrent model \eqref{CONC:DEF} and the history-index functional linear model in \eqref{histIndex1}, as well as the hybrid model in \eqref{eq:fcreg} used in the data application.

Consider a complete probability space $(\Omega,\mathcal{F},P)$, where $\mathcal{F}\subset2^{\Omega}$ 
is the $\sigma$-algebra on $\Omega$ and $P$ is a probability measure. Denote the space of random variables with finite $p^{\text{th}}$-moment by $\Lp$ $(p\geq 1)$, that is, for $Y: \Omega \rightarrow \mathbb{R},\ Y \in \Lp$ if $E|Y|^p<\infty$. A sequence of real-valued random variables $Y_m$ is said to converge to a random variable $Y$ in the $p^{\text{th}}$ moment if $||Y_m- Y||_p : = (E|Y_m- Y|^p)^{1/p}\rightarrow0$ as $m\rightarrow \infty$. We define a  process $Y(\cdot)$ to be $p^{\text{th}}$-moment continuous if for any $t_m \rightarrow t \in \mathcal{T}, \ ||Y(t_m) - Y(t)||_p \rightarrow 0 \text{ as } m \rightarrow \infty.$ The notions of $p^{\text{th}}$-moment differentiability  and $p^{\text{th}}$-moment Riemann integrability are defined similarly (see \cite{soon:73} Chapter $4$, pages $92$ and $100$).

We say that a stochastic process $X : \mathcal{T} \rightarrow \Lp$ is a solution of  an RDED {in the $\Lp$ sense} on the interval $\mathcal{T}$ if $X$ is  $p^{\text{th}}$-moment differentiable on $I$, $p^{\text{th}}$-moment continuous on $\mathcal{T}' : = [t_0-\tau_0,t_0]\cup \mathcal{T}$, and $X$ satisfies the RDED including the corresponding initial condition \citep{cala:19}. We require the following assumptions on the coefficient functions and predictor processes in model \eqref{CONC:DEF}.

\ben[label = (A\arabic*), series = freg, start = 0]
\item \label{ass:A1} $\alpha(\cdot$) and $\beta_0(\cdot)$ are continuous on $\mathcal{T} = [t_0,T]$.
\item \label{ass:A2} $Z(\cdot)$ is $p^{th}$-moment continuous on $\mathcal{T}$.
\item \label{ass:A3} The predictor processes $U_j$ satisfy $U_{j}(\cdot) \in \Lp$ and are $p^{\text{th}}$-moment continuous on $D_{j} : = [t_0 - \tau_{j}, T -\tau_{j}]$, for $j = 1,\dots, J.$
\een

\label{model:conc}


\cite{cala:19} showed the existence and uniqueness of the solution of  the following general form of a random delay differential equation with discrete delay $\tau>0$, given by
\begin{align}
    \label{calatayud}
    x'(t,\omega) &= r(x(t,\omega),x(t-\tau,\omega),t,\omega), \quad t \in [t_0,T],\nonumber\\
    x(t,\omega) &= r_0(t,\omega), \quad t_0-\tau\leq t\leq t_0
\end{align} and established  the following result. 

\noindent {\bf Theorem of Calatayud et al. (2019).} {\it If $r$ satisfies the  Lipschitz condition:
$\lVert r(x,y,t) - r(u,v,t)\rVert_p \leq k(t) \max \{\lVert x-u\rVert_p, \lVert y-v\rVert_p\},$
for $x,y \in \Lp,$ $t \in [t_0,T],$ and a $k$ with $\int_{t_0}^T |k(t)|dt <\infty,$ then the random delay differential equation \eqref{calatayud} has a unique solution in the $\Lp$ sense.}

The concurrent model in \eqref{CONC:DEF} is a special case of the random delay differential equation \eqref{calatayud} with a specific choice of $r$. A key step is to show that the function $f$ defined in \eqref{CONC:DEF} is $p^{\text{th}}$-moment continuous. The existence and uniqueness of  the solutions of \eqref{CONC:DEF} can then be obtained following similar arguments to those in the proof of Theorem 2.2 in \cite{cala:19}. We summarize this result in  Proposition \ref{thm1}. All proofs are in Appendix A.1.
\begin{prop}
\label{thm1}
When $\tau_0>0$, under the regularity conditions \ref{ass:A1}-\ref{ass:A3} and assuming that the coefficients $\beta_{j}(\cdot)$ are continuous on $\mathcal{T} = [t_0,T],\ j = 1,\dots, J,$ the functional concurrent DDE model given by \eqref{CONC:DEF} has a unique solution in the $\Lp$ sense provided that the initial condition $g(\cdot)$ is $p^{\text{th}}$-moment continuous.
\end{prop}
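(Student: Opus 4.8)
The plan is to recognize model \eqref{CONC:DEF} as an instance of the general random delay differential equation \eqref{calatayud} and then to verify the two hypotheses needed to invoke the theorem of \cite{cala:19}, namely a Lipschitz condition on the driving function together with integrability of its Lipschitz coefficient. Reading off the right-hand side of \eqref{CONC:DEF}, I would set
\begin{equation*}
r(x,y,t,\omega) = \alpha(t) + \beta_0(t)\, y + \sum_{j=1}^J \beta_j(t)\, U_j(t-\tau_j,\omega) + Z(t,\omega),
\end{equation*}
where $y$ is the placeholder for the delayed state $X(t-\tau_0)$ and the predictor and drift contributions are absorbed into the explicit $t$- and $\omega$-dependence. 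A useful simplification is that $r$ does not depend on the instantaneous state $x=X(t)$, so $r$ is affine in $(x,y)$; this makes the Lipschitz verification essentially immediate.

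Next I would check the Lipschitz condition. For any $x,y,u,v\in\Lp$,
\begin{equation*}
\lVert r(x,y,t)-r(u,v,t)\rVert_p = \lVert \beta_0(t)(y-v)\rVert_p = |\beta_0(t)|\,\lVert y-v\rVert_p \le |\beta_0(t)|\,\max\{\lVert x-u\rVert_p,\lVert y-v\rVert_p\},
\end{equation*}
so that $k(t)=|\beta_0(t)|$ serves as the Lipschitz coefficient. By \ref{ass:A1} the function $\beta_0$ is continuous on the compact interval $[t_0,T]$, hence bounded, and therefore $\int_{t_0}^{T}|\beta_0(t)|\,dt<\infty$, which is exactly the integrability requirement of the theorem.

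The step I expect to be the main obstacle is establishing the $p^{\text{th}}$-moment continuity of the forcing function $f$, that is, of $t\mapsto r(\cdot,\cdot,t)$, since this is what legitimizes in the $\Lp$ sense the Picard-type iteration underlying the result of \cite{cala:19}. Here I would argue term by term: $\alpha(\cdot)$ is deterministic and continuous by \ref{ass:A1}; $Z(\cdot)$ is $p^{\text{th}}$-moment continuous by \ref{ass:A2}; and each delayed predictor $U_j(\cdot-\tau_j)$ is $p^{\text{th}}$-moment continuous precisely because \ref{ass:A3} supplies this continuity on the shifted interval $D_j=[t_0-\tau_j,T-\tau_j]$. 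The remaining work is to show that multiplying by a bounded continuous deterministic coefficient and summing finitely many terms preserves $p^{\text{th}}$-moment continuity. For a product $\beta_j(t)U_j(t-\tau_j)$ I would use the split
\begin{equation*}
\lVert \beta_j(t_m)U_j(t_m-\tau_j)-\beta_j(t)U_j(t-\tau_j)\rVert_p \le |\beta_j(t_m)|\,\lVert U_j(t_m-\tau_j)-U_j(t-\tau_j)\rVert_p + |\beta_j(t_m)-\beta_j(t)|\,\lVert U_j(t-\tau_j)\rVert_p,
\end{equation*}
which tends to $0$ as $t_m\to t$ using boundedness and continuity of $\beta_j$, the $\Lp$-continuity of $U_j$, and finiteness of $\lVert U_j(t-\tau_j)\rVert_p$ from \ref{ass:A3}; summing over $j$ (and treating $\beta_0(t)X(t-\tau_0)$ analogously along the solution) then yields $p^{\text{th}}$-moment continuity of $f$ by the triangle inequality in $\Lp$.

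Finally, with the Lipschitz condition, its integrable coefficient, and the $p^{\text{th}}$-moment continuity of $f$ all in hand, I would invoke the theorem of \cite{cala:19} to conclude that \eqref{CONC:DEF} has a unique solution in the $\Lp$ sense. The hypotheses $\tau_0>0$ and the $p^{\text{th}}$-moment continuity of $g$ enter at this last stage: the strict positivity of the delay guarantees the delayed-argument structure required by \eqref{calatayud}, while continuity of $g$ ensures the solution is $p^{\text{th}}$-moment continuous on the full interval $[t_0-\tau_0,T]$ and matches the initial segment at $t_0$, as demanded by the definition of an $\Lp$ solution.
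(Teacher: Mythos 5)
Your proposal is correct and follows essentially the same route as the paper's proof: it casts \eqref{CONC:DEF} as an instance of \eqref{calatayud}, verifies the Lipschitz condition with coefficient $k(t)=|\beta_0(t)|$ (integrable since $\beta_0$ is continuous on the compact interval $\mathcal{T}$), and establishes the $p^{\text{th}}$-moment continuity of $f$ term by term using the same product splits for $\beta_0(t)y$ and $\beta_j(t)U_j(t-\tau_j)$ before invoking the theorem of Calatayud et al.
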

\begin{rem}
\label{rem1} Proposition  \ref{thm1} requires the  $p^{\text{th}}$-moment continuity of $g(\cdot)$ on $s\in[t_0-\tau_0, t_0]$ for the uniqueness of the solution in the  $\Lp$ sense. If $g(\cdot)\in C^1([t_0-\tau_0, t_0], \mathbb{R})$ almost surely, then there exists a version of the $\Lp$-solution which solves the RDED in the sample path sense, see Theorem 2.3 in \cite{cala:19} and \cite{cara:19}. 
Note that in \eqref{CONC:DEF}, we do not include the process $X(t)$ itself as one of the predictors; included is only the delayed version $X(t-\tau_0)$, where $\tau_0>0$. Hence solutions in the sample path sense do not require  $g$ to be differentiable almost surely. For  $\tau_0=0$, see Remark \ref{rem3}.
\end{rem}
\begin{rem} 
\label{rem2} 
The explicit sample path solution in the special case of a linear RDED has been derived  in \cite{cort:20, cara:19, cala:19}. The linear RDED often includes random coefficients and a random time-varying forcing term. In  \eqref{CONC:DEF}, the terms involving the predictor processes $U_j(\cdot)$, the drift process $Z(\cdot)$, and the intercept coefficient function $\alpha(\cdot)$ form the constituents of the random forcing term; the  varying coefficient functions $\alpha,  \beta_0, \beta_j$ are non-random. Using the method of steps as outlined in the proof of Theorem 2.3 in \cite{cala:19}, we can provide a solution  to \eqref{CONC:DEF} in the sample path sense as follows.
For $t \in [t_0, t_0 +\tau_0]$,  \eqref{CONC:DEF} is equivalent to an ordinary RDE given by
\begin{align}
\label{conc:sol1}
    &X'(t) = f(g(t-\tau_0),t)  \nonumber\\
    &X(t_0)  = g(t_0),
\end{align}
where $f$ is defined as in \eqref{CONC:DEF}. Then, the solution of  \eqref{conc:sol1} is given by
\begin{align*}
    X_1(t) := X(t)&= \int_{t_0}^{t} f(g(s-\tau_0),s) ds + g(t_0), \ t\in [t_0, t_0 +\tau_0].
\end{align*}
Next, for $t \in [t_0+\tau_0, t_0 +2\tau_0]$, 
\begin{align}
\label{conc:sol2}
    & X'(t) = f(X_1(t-\tau_0),t) \nonumber\\
    & X(t_0+\tau_0) = X_1(t_0 + \tau_0).
\end{align}
Solving \eqref{conc:sol2} leads to
\begin{align*}
    X_2(t) := X(t)&= \int_{t_0}^{t} f(X_1(s-\tau_0),s) ds + X_1(t_0+\tau_0), \ t\in [t_0+\tau_0, t_0 +2\tau_0].
\end{align*}
Repeating this argument until the entire domain $\mathcal{T}= [t_0,T]$ is covered 
one obtains the sample path solution. Due to the existence and uniqueness of the solution of  \eqref{CONC:DEF} in the $\Lp$ sense and by Remark \ref{rem1}, the sample path solution obtained as above is unique.
\end{rem}
\begin{rem} 
\label{rem3}
 In applications, the delay parameter can take the value zero and  when $\tau_0=0$  in \eqref{CONC:DEF} one arrives at an  ordinary RDE without delay. For this case,  existence and uniqueness of  solutions  has been well studied (see, e.g.,  Theorem 5.1.1 in \cite{soon:73} and \cite{stra:70}). 
The sample path solution in this case can be obtained via the integration factor method and is given by
\begin{align*}
    X(t)&=e^{B(t)} \left( g(t_0)+ \int_{t_0}^t q(s) e^{-B(s)}ds \right), \\
    B(t)&:=\int_{t_0}^t \beta_0(u)du,
\end{align*}
where $q(t):=f(0,t)$ with $f$ defined as in \eqref{CONC:DEF}. Here $q(\cdot)$ consists of the intercept term $\alpha(t)$, the predictor processes $U_j(t)$ along with the corresponding coefficient functions $\beta_j(t),\ j = 1,\dots, J$, and the drift process $Z(t)$.
\end{rem}

\subsection{Functional linear model with history index for distributed delay}

\noindent We start with the simplest history index model with only one predictor process $U(\cdot)$, characterized as a first order RDED described in \eqref{histIndex1}. We prove the existence of a unique solution of \eqref{histIndex1} by adopting similar arguments as those  behind Proposition 2.1 and Theorem 2.2 of \cite{cala:19}. 
The extension to multivariate predictor processes $\mathbf{U}(\cdot)$ is straightforward. Observe that one  can rewrite  model \eqref{histIndex1} as
\begin{align}
X'(t)
&= \alpha(t) + \ \int_{t-\tau_0}^{t}\gamma(t-s,t) X(s)ds + \int_{t-\tau_1}^{t} \gamma_1(t-s,t) U(s)ds +Z(t) \label{histIndex1:Calatayud1},
\end{align}
where $t \in \mathcal{T} = [t_0, T]$ with the initial condition $X(t)=g(t), \ t \in [t_0 - \tau_0,t_0].$

Assume that $g(\cdot)$ is {$p^{\text{th}}$-moment continuous} on $[t_0 - \tau_0,t_0]$ and define $\tilde{f}: \Lp \times I_b \rightarrow \mathbb{R}$ such that for {a $p^{\text{th}}$-moment continuous process} $Y(\cdot) : [t_0-\tau_0,b]\rightarrow \mathbb{R}$ and $t\in I_b := [t_0,b]$, $t_0<b,$
\begin{align}
\label{def:f:hist}
    \tilde{f}(Y,t)&= \alpha(t) + \ \int_{t-\tau_0}^{t}\gamma(t-s,t) Y(s)ds + \int_{t-\tau_1}^{t} \gamma_1(t-s,t) U(s)ds +Z(t).
\end{align}
Here $\tilde{f}$ is a random functional whose first argument is  the trajectory $\{Y(s) : s \in [t_0-\tau_0,b]\}$ and whose  second argument  is $t \in [t_0,b]$. The following proposition with proof in Appendix A.2 provides the continuity of $\tilde{f}$ in $t$ in the $\Lp$ sense.
\begin{prop}
\label{prop:cont:hist}
Suppose $Y:[t_0-\tau_0,b]\rightarrow \Lp $ is $p^{\text{th}}$-moment continuous for $\tau_0 >0$. Then $\tilde{f}$ as defined in \eqref{def:f:hist} is continuous in $t$ in the $\Lp$ sense on the domain $[t_0,b],\ t_0<b$. 
\end{prop}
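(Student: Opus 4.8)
The plan is to view $\tilde f(Y,\cdot)$ as a sum of four terms and to establish $p^{\text{th}}$-moment continuity in $t$ term by term; since $\|\cdot\|_p$ is subadditive, continuity of the sum follows. The intercept $\alpha(t)$ is deterministic and continuous, hence trivially $p^{\text{th}}$-moment continuous, and the drift $Z(t)$ is $p^{\text{th}}$-moment continuous by assumption~\ref{ass:A2}. Thus the entire burden falls on the two integral terms. These are structurally identical, so I would carry out the argument in full for the $Y$-integral and then observe that the $U$-integral is handled verbatim, using the $p^{\text{th}}$-moment continuity of the covariate process $U$ on its domain together with the smoothness of $\gamma_1$.

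For the $Y$-integral I would first substitute $u=t-s$ to pass to the fixed-limit form $I(t):=\int_0^{\tau_0}\gamma(u,t)\,Y(t-u)\,du$, which matches \eqref{histIndex1} and avoids $t$-dependent limits of integration. Fixing $t_m\to t$ in $[t_0,b]$, I would split the difference of integrands as
\begin{align*}
\gamma(u,t_m)Y(t_m-u)-\gamma(u,t)Y(t-u)
&= \gamma(u,t_m)\bigl[Y(t_m-u)-Y(t-u)\bigr] \\
&\quad + \bigl[\gamma(u,t_m)-\gamma(u,t)\bigr]Y(t-u),
\end{align*}
and then apply the integral form of Minkowski's inequality, $\bigl\|\int_0^{\tau_0}h(u)\,du\bigr\|_p\le\int_0^{\tau_0}\|h(u)\|_p\,du$, to obtain
\begin{align*}
\|I(t_m)-I(t)\|_p
\le{}& \int_0^{\tau_0}|\gamma(u,t_m)|\,\|Y(t_m-u)-Y(t-u)\|_p\,du \\
&+ \int_0^{\tau_0}|\gamma(u,t_m)-\gamma(u,t)|\,\|Y(t-u)\|_p\,du.
\end{align*}

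The two remaining ingredients both come from compactness. Since $Y$ is $p^{\text{th}}$-moment continuous on the compact interval $[t_0-\tau_0,b]$, it is a continuous map into the metric space $(\Lp,\|\cdot\|_p)$ over a compact set, hence uniformly $p^{\text{th}}$-moment continuous and bounded, so $\sup_{s}\|Y(s)\|_p<\infty$; note also that both $t_m-u$ and $t-u$ lie in $[t_0-\tau_0,b]$ for $u\in[0,\tau_0]$. Because $\gamma$ is smooth, it is bounded and uniformly continuous on the compact rectangle $[0,\tau_0]\times[t_0,b]$. Consequently, for $m$ large the uniform $p^{\text{th}}$-moment continuity of $Y$ makes $\|Y(t_m-u)-Y(t-u)\|_p$ small uniformly in $u$ (the increment $t_m-t$ is independent of $u$), so the first integral vanishes; and the uniform continuity of $\gamma$ makes $|\gamma(u,t_m)-\gamma(u,t)|\to0$ uniformly in $u$, which, combined with the bound on $\|Y(t-u)\|_p$, drives the second integral to zero. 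This gives $\|\tilde f(Y,t_m)-\tilde f(Y,t)\|_p\to0$.

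The main obstacle is passing the limit inside the $du$-integral, which is not justified by pointwise convergence in $u$ alone; the decisive step is upgrading mere $p^{\text{th}}$-moment continuity of $Y$ (and of $U$) to \emph{uniform} $p^{\text{th}}$-moment continuity via compactness of $[t_0-\tau_0,b]$, used jointly with the integral Minkowski inequality that exchanges the $\Lp$ norm with the deterministic integral. Once these are in place, the kernel-shift term and the trajectory-shift term are each controlled by a single modulus-of-continuity estimate, and the $U$-integral follows by the same computation.
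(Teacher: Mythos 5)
Your proof is correct, and it handles the only delicate term---the distributed-delay integrals---by a genuinely different decomposition than the paper's. The paper keeps the moving limits $\int_{t_m-\tau_0}^{t_m}$ versus $\int_{t-\tau_0}^{t}$ and splits the difference into a \emph{kernel-shift} term $\int_{t_m-\tau_0}^{t_m}(\gamma(t_m-s,t_m)-\gamma(t-s,t))Y(s)\,ds$ (controlled, as in your argument, by uniform continuity of $\gamma$ on a compact set together with finiteness of $\int\lVert Y(s)\rVert_p\,ds$) plus a \emph{domain-shift} term with the same integrand, which it controls by writing $\Psi(r)=\int_{r-\tau_0}^{r}\gamma(t-s,t)Y(s)\,ds$ as a difference of two indefinite integrals and invoking the $p^{\text{th}}$-moment continuity of the indefinite $\Lp$-Riemann integral \citep[p.~103(5)]{soon:73}. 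You instead substitute $u=t-s$ so that the limits become fixed at $[0,\tau_0]$, which converts the domain-shift term into a \emph{trajectory-shift} term $\gamma(u,t_m)\bigl[Y(t_m-u)-Y(t-u)\bigr]$; this is then killed by upgrading the assumed $p^{\text{th}}$-moment continuity of $Y$ to \emph{uniform} $p^{\text{th}}$-moment continuity on the compact interval $[t_0-\tau_0,b]$, a step the paper never needs. The trade is a fair one: your route is more self-contained (only the integral Minkowski inequality for $\Lp$-valued Riemann integrals and compactness are used, with no appeal to the continuity of indefinite integrals), while the paper's route avoids uniform continuity of the process and leans on the stock results of \cite{soon:73}. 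All auxiliary facts you use are justified as stated---$Y$ maps a compact interval continuously into the metric space $(\Lp,\lVert\cdot\rVert_p)$, hence is uniformly continuous and bounded there; $t-u$ and $t_m-u$ stay in $[t_0-\tau_0,b]$; and $\gamma$, $\gamma_1$ are (uniformly) continuous on compact rectangles by \ref{ass:B1}---so the argument closes, and the $U$-integral is indeed handled verbatim.
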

For the random process $X$ on $[t_0-\tau_0, T]$ and the time variable $t \in [t_0,T]$,  the history index model \eqref{histIndex1:Calatayud1} can be expressed as
 \begin{align}
 \label{histIndex1:Calatayud2}
 X'(t) &=\tilde{f}(X,t).
 \end{align}
For the existence of a unique solution of \eqref{histIndex1:Calatayud2} we need the following assumptions, in addition to \ref{ass:A1}- \ref{ass:A3}:
\ben[label = (B\arabic*), series = freg, start = 0]
     \item \label{ass:B1} $\gamma$ is continuous on $\mathcal{T}'\times \mathcal{T} = [t_0-\tau_0,T]\times [t_0,T]$ and  $\gamma_1$ is continuous on $\mathcal{T}'' \times \mathcal{T} = [t_0-\tau_1,T] \times [t_0,T]$.
     \item \label{ass:B2} $X(\cdot)\in \Lp$ on $\mathcal{T}'= [t_0-\tau_0,T]$ and is $p^{\text{th}}$-moment continuous on $I'$.
\een 
The following result 
characterizes the solution of \eqref{histIndex1:Calatayud2}, where the integral in condition~\ref{cond:b} of the following Proposition is defined in the $p^{\text{th}}$-moment Riemann integral sense \cite[for definition of $p^{\text{th}}$-moment Riemann integrability see][Chapter $4$ page $100$]{soon:73}.
\begin{prop}
\label{prop:Lpsol:hist}
The process  $X: \mathcal{T} \rightarrow \Lp$ is a  $\Lp$ solution of \eqref{histIndex1:Calatayud2} if and only if for $\tau_0>0,$
\ben[label=(\alph*)]
    \item \label{cond:a} $X$ is $p^{\text{th}}$-moment continuous on $[t_0-\tau_0 ,T]$
    \item \label{cond:b} $X(t) = g(t_0) + (\Lp) \int_{t_0}^{t} \tilde{f}(X,s)ds$ for each $t \in \mathcal{T}$
    \item \label{cond:c} $X(t) = g(t)$ for each $t\in [t_0-\tau_0,t_0]$.
\een
\end{prop}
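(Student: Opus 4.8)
The plan is to prove this as the standard equivalence between the differential form \eqref{histIndex1:Calatayud2} and its associated integral equation, with the two implications handled by the fundamental theorem of calculus in the $p^{\text{th}}$-moment sense (see \cite{soon:73}, Chapter~4). The linchpin in both directions is Proposition~\ref{prop:cont:hist}: whenever $X$ is $p^{\text{th}}$-moment continuous on $[t_0-\tau_0,T]$, it guarantees that $s \mapsto \tilde{f}(X,s)$ is $p^{\text{th}}$-moment continuous on $[t_0,T]$, and therefore $p^{\text{th}}$-moment Riemann integrable there. This is exactly what makes the integral appearing in condition~\ref{cond:b} well defined, so I would invoke it at the outset.

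For the forward implication (solution $\Rightarrow$ \ref{cond:a}--\ref{cond:c}), I would start from the definition of an $\Lp$ solution of \eqref{histIndex1:Calatayud2}: such an $X$ is $p^{\text{th}}$-moment continuous on $\mathcal{T}' = [t_0-\tau_0,t_0]\cup\mathcal{T}$ and satisfies the initial condition, which immediately yields \ref{cond:a} and \ref{cond:c}, and in particular $X(t_0)=g(t_0)$. For \ref{cond:b}, I would use that being a solution means $X$ is $p^{\text{th}}$-moment differentiable with $X'(s)=\tilde{f}(X,s)$. Since $X$ satisfies \ref{cond:a}, Proposition~\ref{prop:cont:hist} makes $s\mapsto X'(s)$ $p^{\text{th}}$-moment continuous, hence integrable, and the $p^{\text{th}}$-moment fundamental theorem of calculus gives $X(t)-X(t_0) = (\Lp)\int_{t_0}^t X'(s)\,ds = (\Lp)\int_{t_0}^t \tilde{f}(X,s)\,ds$. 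Substituting $X(t_0)=g(t_0)$ yields \ref{cond:b}.

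For the converse (\ref{cond:a}--\ref{cond:c} $\Rightarrow$ solution), conditions \ref{cond:a} and \ref{cond:c} directly supply the $p^{\text{th}}$-moment continuity on $\mathcal{T}'$ and the initial condition required in the definition. It remains to verify that $X$ is $p^{\text{th}}$-moment differentiable on $\mathcal{T}$ with derivative $\tilde{f}(X,\cdot)$. Invoking Proposition~\ref{prop:cont:hist} again under \ref{cond:a}, the integrand $s\mapsto\tilde{f}(X,s)$ is $p^{\text{th}}$-moment continuous; differentiating the integral representation \ref{cond:b} via the $p^{\text{th}}$-moment fundamental theorem of calculus then gives $X'(t)=\tilde{f}(X,t)$ for each $t\in\mathcal{T}$, so $X$ satisfies \eqref{histIndex1:Calatayud2}. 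Collecting the continuity, the initial condition, and this differential identity shows that $X$ is an $\Lp$ solution, completing the equivalence.

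I expect the only genuine work to lie in the $p^{\text{th}}$-moment fundamental theorem of calculus step, namely that a $p^{\text{th}}$-moment continuous integrand produces a $p^{\text{th}}$-moment differentiable primitive whose derivative recovers the integrand, and conversely that a $p^{\text{th}}$-moment continuously differentiable process is recovered by integrating its derivative. These are the $\Lp$ analogues of the classical results and follow the deterministic argument essentially verbatim once $p^{\text{th}}$-moment Riemann integrability of the integrand is secured, which is precisely the role of Proposition~\ref{prop:cont:hist}. I emphasize that no Lipschitz or contraction argument enters here: existence and uniqueness are handled separately, and the present proposition only establishes the differential--integral equivalence.
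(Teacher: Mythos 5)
Your proposal is correct and follows essentially the same route as the paper's proof: both directions rest on the $p^{\text{th}}$-moment continuity of $s\mapsto\tilde{f}(X,s)$ (Proposition~\ref{prop:cont:hist}), the resulting $p^{\text{th}}$-moment Riemann integrability, and the $\Lp$ fundamental theorem of calculus to pass between the differential and integral forms. The paper merely makes the same steps explicit by citing the specific results in \cite{soon:73} (pages 101, 103, and 104) that you describe as ``the only genuine work.''
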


The following result establishes   existence and uniqueness of a solution for \eqref{histIndex1:Calatayud2}.  The proof extends the classical Picard theorem for deterministic ODE to RDED in the  $\Lp$ sense, via the Banach fixed-point theorem, following a similar line of arguments as in \cite{cala:19} (see Appendix A.2). 
\bthm
\label{thm2}
Assume the regularity conditions \ref{ass:A1}-\ref{ass:A3} and \ref{ass:B1}-\ref{ass:B2}. Then for $\tau_0>0$ we have that 
$$k(t)=\int_{t-\tau_0}^t|\gamma(t-s,t)|ds\in L^1(\mathcal{T}), \ t\in \mathcal{T} = [t_0,T]$$
and a solution of \eqref{histIndex1} exists and is unique in the $\Lp$ sense.
\ethm
For the existence and uniqueness of the solution of the hybrid model in \eqref{eq:fcreg}, which is the most promising of the models considered  for modeling the growth rate of COVID-19, the following corollary applies. 
\begin{cor}
\label{cor1}
Suppose the regularity conditions \ref{ass:A2}, \ref{ass:A3} and \ref{ass:B2} hold. If $\alpha(\cdot)$ and the coefficients $\beta_{j}(\cdot)$ are continuous on $ \mathcal{T}= [t_0,T],\ j = 1,\dots, J,$ and $\gamma$ is continuous on $\mathcal{T}'\times \mathcal{T} = [t_0-\tau_0,T]\times [t_0,T]$, then the model in \eqref{eq:fcreg} has a unique $\Lp$ solution.
\end{cor}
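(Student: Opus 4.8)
The plan is to reduce the hybrid model \eqref{eq:fcreg} to the pure distributed-delay form \eqref{histIndex1:Calatayud2} already handled by Theorem \ref{thm2}, by observing that the only term in \eqref{eq:fcreg} depending on the unknown solution is the self-referential integral $\int_0^{\tau_0}\gamma(t,s)X(t-s)\,ds$; the discrete-delay covariate terms $\sum_{j=1}^J \beta_j(t)U_j(t-\tau_j)$ involve only the externally given predictor processes and can therefore be absorbed into the forcing. Concretely, I would set
\[
h(t) := \alpha(t) + \sum_{j=1}^J \beta_j(t)U_j(t-\tau_j) + Z(t), \qquad t\in[t_0,T],
\]
and rewrite \eqref{eq:fcreg} as $X'(t) = \hat f(X,t)$ with $\hat f(Y,t) = h(t) + \int_{t-\tau_0}^t \gamma(t-s,t)Y(s)\,ds$, using the change of variable $s\mapsto t-s$ that takes \eqref{histIndex1} to \eqref{histIndex1:Calatayud1}. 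This functional has exactly the structure of $\tilde f$ in \eqref{def:f:hist}, the sole difference being the richer forcing $h$ in place of $\alpha(\cdot) + \int \gamma_1 U\,ds + Z(\cdot)$.

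The crux is to show that $h$ is $p^{\text{th}}$-moment continuous on $\mathcal{T}=[t_0,T]$, since the proofs of Proposition \ref{prop:cont:hist}, Proposition \ref{prop:Lpsol:hist} and Theorem \ref{thm2} use the forcing term only through this property. The drift $Z(\cdot)$ is $p^{\text{th}}$-moment continuous by \ref{ass:A2}. For each covariate term, \ref{ass:A3} gives that $U_j(\cdot)\in\Lp$ is $p^{\text{th}}$-moment continuous on $D_j=[t_0-\tau_j,T-\tau_j]$, so the shifted process $t\mapsto U_j(t-\tau_j)$ is $p^{\text{th}}$-moment continuous on $[t_0,T]$. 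Multiplying by the deterministic coefficient $\beta_j(\cdot)$, which is continuous and hence bounded on the compact set $[t_0,T]$, preserves this: for $t_m\to t$ one bounds $\|\beta_j(t_m)U_j(t_m-\tau_j)-\beta_j(t)U_j(t-\tau_j)\|_p$ via a triangle split into $|\beta_j(t_m)|\,\|U_j(t_m-\tau_j)-U_j(t-\tau_j)\|_p$ and $|\beta_j(t_m)-\beta_j(t)|\,\|U_j(t-\tau_j)\|_p$, both of which vanish. Adding the continuous intercept $\alpha(\cdot)$ and summing over $j$ then yields the $p^{\text{th}}$-moment continuity of $h$.

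With $h$ shown to be $p^{\text{th}}$-moment continuous, the hypotheses of Theorem \ref{thm2} are matched term-by-term for $\hat f$: \ref{ass:A2}, \ref{ass:A3} and \ref{ass:B2} are assumed directly, the continuity of $\alpha$ and of the $\beta_j$ supplies the role of \ref{ass:A1} through $h$, and the continuity of $\gamma$ plays the role of \ref{ass:B1}. The Lipschitz constant governing the self-referential integral, $k(t)=\int_{t-\tau_0}^t|\gamma(t-s,t)|\,ds$, depends only on $\gamma$, which is continuous on the compact set $\mathcal{T}'\times\mathcal{T}$ by hypothesis; hence $k$ is bounded and $k\in L^1(\mathcal{T})$, exactly as in Theorem \ref{thm2}. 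The Banach fixed-point (method-of-steps) argument then applies to $\hat f$ without modification, delivering a unique $\Lp$ solution of \eqref{eq:fcreg}. I expect no genuine obstacle beyond the forcing-term verification: the fixed-point machinery is inherited from Theorem \ref{thm2} because the covariate terms contribute nothing to the Lipschitz bound in the solution variable. The only point demanding care is the domain bookkeeping for the shift by $\tau_j$, ensuring $U_j(t-\tau_j)$ ranges over $D_j$ as $t$ ranges over $[t_0,T]$, which is precisely what the definition of $D_j$ in \ref{ass:A3} guarantees.
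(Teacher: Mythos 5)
Your proposal is correct and follows essentially the same route as the paper: the paper likewise defines a functional $\tilde h(Y,t)$ that lumps the intercept, the discrete-delay covariate terms $\sum_j \beta_j(t)U_j(t-\tau_j)$, and $Z(t)$ together with the self-referential distributed-delay integral, verifies its $p^{\text{th}}$-moment continuity by exactly the triangle-inequality splits you describe (borrowing from the continuity proofs of $f$ and of Proposition \ref{prop:cont:hist}), and then reuses the Banach fixed-point argument of Theorem \ref{thm2}, whose contraction constant depends only on $\gamma$. No gaps.
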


\section{Learning Dynamics from Samples of Multivariate Stochastic  Processes and Practical Considerations for COVID-19 Caseload Modeling}\label{sec:fitting}

\subsection{Obtaining derivatives}
\label{sec:derivest}

\noindent In data-driven differential equation modeling, given a sample of observed processes, a necessary first step is the estimation of  derivatives. In applications  such as the COVID-19 case trajectories the available data are usually noise-contaminated, and therefore some care is needed to obtain viable derivative estimates.
The situation for the observed data for one of the trajectories is reflected by the nonparametric regression model 
\begin{equation}
    Y_k = X(t_k) + \epsilon_k, \quad k = 1,...,K,
    \label{nonparareg}
\end{equation}
where $\{t_k\}_{k=1}^K$ are the grid points where the process $X(t)$ is measured and $\epsilon_k$ are the measurement errors, which typically are considered to be independent mean zero random variables with finite variance. 

In our data application, the available data $Y_k$ correspond to  the cumulative case counts of COVID 19, which we consider to be noisy realizations of a smooth underlying function $X(\cdot)$ that is observed on a daily grid. 
For the practical estimation of the derivatives $X'(t)$ one has various choices that all require a tuning parameter. Since in preliminary studies local polynomial fitting yielded the smallest leave-one-out prediction error relative to the difference quotients, which serve as nearly unbiased targets, we opted to obtain derivatives by applying local polynomial regression \citep{mull:87, fan:96} as per Section A.3 of the Appendix, where further details are provided.

\subsection{Target model and initial lag and covariate process selection}\label{sec:varSelect}

\bco

For learning empirical dynamics of the COVID-19 case trajectories given covariate processes $U_j$,  we adopt a two stage procedure to estimate varying coefficient functions and  time delays. We first fit model  \eqref{eq:history-nocov} to the derivative of the process driving the RDED to estimate the historical weight surface $\gamma(\cdot,\cdot)$ given delay $\tau_0$ and then integrate the driving process weighed by the estimated history index surface across the delay period. The integral is used as a covariate in a functional concurrent regression model along with the covariate processes $U_j$ that exhibit  individual delay components. 

\fi


For modeling the COVID-19 caseload process $X$ for the states of the U.S. 
we adopt the RDED model   \eqref{eq:fcreg},
\begin{equation*}
 \case'(\tm) = \intc(\tm) + {\int_0^{\lag_{0}}\hislope(\tm,\vtm)\case(\tm-\vtm)\diffop\vtm} + \sum_{\pidx = 1}^{\nprdt} \slope_{\pidx}(\tm) \prdt_{\pidx}(\tm-\lag_{\pidx}) + Z(\tm), \quad \tm\in\tdom
 \end{equation*}
{where the time delay of the effect of the process $\case$ at time $\tm\in\tdom$ is distributed on the interval $[\tm-\lag_{0},\tm]$}, and $\{\lag_{\pidx}\}_{\pidx=1}^{\nprdt}$ are  discrete time delays or time lags for the covariate or predictor processes  $\prdt_{\pidx}$.  
Our starting point is that the data  $\{(\case_{\sidx},\prdt_{\sidx1}, \dots, \prdt_{\sidx\nprdt})\}_{\sidx=1}^{\nsub}$ represent  $\nsub$ independent realizations of the stochastic process $(\case,\prdt_{1}, \dots, \prdt_{\nprdt})$, where each realization (corresponding to a state)  is indexed by $i$ and  $n$ is the number of states included in the analysis. 

Given $\tau_0$, to estimate the history index weight function $\hislope(s, t)$ in the historical model without covariates, we preform a scalar-to-function linear regression for each grid point $t_k$, i.e.,
\bal
\label{eq:history-nocov}
X'(t_k) &=  \alpha(t_k) +\int_0^{\tau_0}\gamma(s,t_k) X(t_k-s)\ ds +Z(t_k), \,\,\, 1 \le k \le K.
\eal
This can be implemented using function \texttt{FLM} in the R package \texttt{fdapace} \citep{fdapace}. With estimates of $\gamma(s, t_k)$ for each $k$ in hand,  the estimate of the history index weight function, $\ehislope(s, t)$ for all $s,t\in\tdom$, is obtained using a two dimensional kernel smoother.

For each of the covariates  $\prdt_{\pidx}$, we conduct an initial lag selection based on functional varying-coefficient regression including the corresponding variable as a single predictor, considering the models 
\bgt\label{eq:fcreg_singlePrdt}
\case'(\tm_{\tidx}) = \intc_{\pidx}(\tm_{\tidx}) + \vslope_{\pidx}(\tm_{\tidx})\prdt_{\pidx}(\tm_{\tidx} -\lag)+\err_{\pidx\tidx}, \,\,\, 0 \le j \le J, \,\, 1 \le k \le K,\egt
where $\prdt_{0} \coloneqq \case$. 
Given a lag $\lag$, for each subject $\sidx$ and time point $\tm_{\tidx}$, we obtain the leave-one-out prediction 
\bgt\nn
\elag\case'_{\sidx,-\sidx}(\tm_{\tidx};\lag) = \elag\intc_{\pidx,-\sidx}(\tm_{\tidx}) + \elagVslope_{\pidx,-\sidx}(\tm_{\tidx})\prdt_{\sidx\pidx}(\tm_{\tidx}-\lag), \egt
where $\elag\intc_{\pidx,-\sidx}(\tm_{\tidx})$ and $\elagVslope_{\pidx,-\sidx}(\tm_{\tidx})$ are estimated by fitting model \eqref{eq:fcreg_singlePrdt} by ordinary least squares,  excluding the data from subject $\sidx$. 
The lags are chosen to minimize the leave-one-out prediction error across subjects, 
\bgt\label{eq:inilag}
\elag\lag_{\pidx} = \argmin_{\lag\in\{0,1,\dots,21\}} \frac{1}{\nsub\ntgrid} \sum_{\sidx = 1}^{\nsub} \sum_{\tidx=1}^{\ntgrid} \left(\case'_{\sidx}(\tm_{\tidx}) - \elag\case'_{\sidx,-\sidx}(\tm_{\tidx};\lag)\right)^2. \egt

With initial choices of the lags $\{\elag\lag_{\pidx}\}_{j=1}^J$ {and the estimate $\ehislope$ obtained from model \eqref{eq:history-nocov}}, for each $\tm_{\tidx}$, 
variable selection is then performed using LASSO \citep{tibs:96},
\bgt\nn
\left(\evs\intc(\tm_{\tidx}),(\evs\slope_{\pidx}(\tm_{\tidx}))_{\pidx=0}^{\nprdt}\right) 
= \argmin_{\intc_{\tidx},\bm\slope_{\tidx}}  Q(\intc_{\tidx},\bm\slope_{\tidx}),\\
\aligned
Q(\intc_{\tidx},\bm\slope_{\tidx}) 
&=  \sum_{\sidx = 1}^{\nsub} \left(\case'_{\sidx}(\tm_{\tidx}) - \intc_{\tidx} - \slope_{0\tidx} {\int_0^{\lag_{0}}\ehislope(\tm_{\tidx},\vtm)\case(\tm_{\tidx}-\vtm)\diffop\vtm} - \sum_{\pidx = 1}^{\nprdt} \slope_{\pidx\tidx} \prdt_{\sidx\pidx}(\tm_{\tidx}-\elag\lag_{\pidx})\right)^2\\
&\quad + \pnty \sum_{\pidx = 0}^{\nprdt}|\slope_{\pidx\tidx}|,
\endaligned
\egt
where $\bm\slope_{\tidx} = (\slope_{0\tidx},\dots,\slope_{\nprdt\tidx})$. 
This can be implemented using the R package \texttt{ncvreg} \citep{ncvreg, ncvreg-pkg}, where the tuning parameter $\pnty$ is chosen by leave-one-out cross-validation. 
Considering all $\ntgrid$ time points simultaneously, we obtain proportions of the number of time points $\tm_{\tidx}$ where each variable $\prdt_{\pidx}$ is selected, i.e., 
\bgt\nn \psel_{\pidx} = \#\{\tidx: \evs\slope_{\pidx}(\tm_{\tidx}) \ne 0\} / \ntgrid,\quad\text{for } \pidx=1,\dots,\nprdt.\egt 

Subsequently, selection among the time-varying predictors $\{\prdt_{\pidx}\}_{\pidx=1}^{\nprdt}$
other than the process $\case$  is performed by applying a threshold on these proportions $\{\psel_{\pidx}\}_{\pidx=1}^{\nprdt}$. This  threshold {$\psel^*$} is chosen by leave-one-out cross-validation, 
\bgt\label{eq:varSel} \{\prdt_{\pidx}: \psel_{\pidx} \ge {\psel^*}\}.\egt

\subsection{Lag selection by backfitting}\label{sec:backfitting}
 
Let $\mathcal{V}$ denote the set of selected predictor processes  obtained using the method described in Section~\ref{sec:varSelect}.  Updated lag parameters are then chosen for the variables in $\mathcal{V}$ through an iterative backfitting algorithm such that the total leave-one-out prediction error is minimized. Given the $b^{th}$ iteration of the lag vector, one obtains the $(b+1)^{st}$ update for the $k^{th}$ lag by minimizing the leave-one-out integrated mean squared prediction error, i.e.,

$$ \lag_k^{(b+1)} = \argmin_{\lag_k}\frac{1}{n}\sum_{i=1}^{n}\int_\mathcal{T}
(X'_i(t) - \hat{X}_{i,-i}'^{(b)}(t;\lag_k))^2 dt,$$
\noindent where for $k \in \mathcal{V}$
\begin{align*}
    \hat{X}_{i,-i}'^{(b)}(t;\lag_k) =&~ \hat{\alpha}^{(b)}_{-i}(t)+ {\hat{\beta}^{(b)}_{0,-i}(t)\int_0^{\lag_{0}}\ehislope(\tm,\vtm)\case_i(\tm-\vtm+\lag_{0})\diffop\vtm}\\&+\sum_{j\neq k, j \in \mathcal{V}}
     \hat{\beta}^{(b)}_{j,-i}(t)U_{j,i}(t-\lag_j^{(b)})+ \hat{\beta}^{(b)}_{k,-i}(t)U_{k,i}(t-\lag_k).
\end{align*}

Here the functions $\hat{\beta}^{(b)}_{k,-i}(t)$ represent the fitted coefficients corresponding to the $b^{th}$ iteration of the $j^{th}$ lag, $j \in \mathcal{V}$ with the $i^{th}$ observation held out for prediction. This process is iterated over $k \in \mathcal{V}$ until the lag vector converges. Lags are initialized at the vector $\bm{\lag}^{(0)} = (\elag\lag_{1},\dots,\elag\lag_{j},\dots, \elag\lag_{|\mathcal{V}|})^T$, where $\elag\lag_j$ are as described in Section~\ref{sec:varSelect} and the stopping rule is such that the algorithm terminates if no lags have changed after a complete cycle of updates for all predictors.  
After the final lags are estimated, we lightly smooth the estimated coefficients with local linear smoothing, which is a standard procedure in concurrent functional regression modeling \citep{fan:00:2,mull:08:4}. Specifically, the initial estimates $\hat{\beta}_j(t_k)$ are smoothed to obtain the effect functions presented in Figures 3 and 4, defined as $\hat{\beta}^{smooth}_j(t) = \hat{b}^{(0)}_j$ where 
\begin{equation}
(\hat{b}^{(0)}_{j},\hat{b}^{(1)}_{j}) = \argmin_{b^{(0)}_{j},b^{(1)}_{j}} \sum_{k=1}^{K}K\left(\frac{t_{k}-t}{h}\right)(\hat{\beta}_j(t_k)-b^{(0)}_{j}-b^{(1)}_{j}(t-t_{k}))^2,
\label{eq:lwls}
\end{equation}
with $K(\cdot)$ representing the Epanechnikov kernel with a bandwidth of $h=20$ days.



\section{Random Differential Equation with Delay Fitting and Prediction for COVID 19 Case Trajectories in the United States}

\subsection{Data Description}

We obtained daily confirmed cases across states in the United States from the COVID-19 Data Repository by the Center for Systems Science and Engineering (CSSE) at Johns Hopkins University, which is publicly available at \url{https://github.com/CSSEGISandData/COVID-19} and was accessed on September $19$, $2020$. 
The positive and negative COVID-19 test results per day and state were obtained from the COVID Tracking Project and are  publicly available at \url{https://COVIDtracking.com/} (accessed on September $19$, $2020$). The former variable refers to the number of people with confirmed or probable case of COVID-19 (see \url{https://COVIDtracking.com/about-data/data-definitions} for more details) and the latter refers to the total number of people with a completed negative PCR test. We used effective positivity rates (EPR), defined as the ratio of effective positive results (positive test or probable case) to the effective total tests (positive or probable case plus negative test results).


\bco

\begin{longtable}{|r|l|}
    
\hline \hline
Item & Feature description\\
\hline \hline
1 & cases per million\\
\hline
2 & ratio of effective positive results to effective total tests\\
\hline 
3 & ratio of effective positive results to state population in $2019$\\
\hline
4 & individuals who are currently hospitalized with COVID-19\\
\hline
5 & spending in all categories \\
\hline
6 & spending in arts, entertainment, and recreation categories \\
\hline
7 & spending in accommodation and food service categories\\
\hline
8 & spending in general merchandise stores and apparel and accessories categories\\
\hline
9 & spending in grocery and food store categories\\
\hline
10 & spending in health care and social assistance categories\\
\hline
11 & spending in transportation and warehousing categories\\
\hline
12 & spending among high (top quartile) income ZIP codes in all categories\\
\hline
13 & spending among low (bottom quartiles) income ZIP codes in all categories\\
\hline
14 & spending among middle (middle two quartiles) income ZIP codes in all categories\\
\hline
15 & time spent at retail and recreation locations\\
\hline
16 & time spent at grocery and pharmacy locations\\
\hline
17 & time spent at parks\\
\hline
18 & time spent at inside transit stations\\
\hline
19 & time spent at work places\\
\hline
20 & time spent at residential locations\\
\hline
21 & time spent outside of residential locations\\
\hline
22 & number of small businesses open\\
\hline
23 & number of small businesses open among high(top quartile) income ZIP codes\\
\hline
24 & number of small businesses open among low(bottom quartile) income ZIP codes\\
\hline
25 & number of small businesses open among middle(middle two quartiles) income ZIP codes\\
\hline
26 & number of small businesses open in transportation\\
\hline
27 & number of small businesses open in education and health services\\
\hline
28 & number of small businesses open in leisure and hospitality\\
\hline
29 & net revenue for small businesses\\
\hline
30 & net revenue for small businesses among high(top quartile) income ZIP codes\\
\hline
31 & net revenue for small businesses among low(bottom quartile) income ZIP codes\\
\hline
32 & net revenue for small businesses among middle(middle two quartiles) income ZIP codes\\
\hline
33 & net revenue for small businesses in transportation\\
\hline
34 & net revenue for small businesses in education and health services\\
\hline
35 & net revenue for small businesses in leisure and hospitality\\
\hline
\caption{List of all variables considered (see Data Description Section for further details).}
\label{table:variables}
\end{longtable} 

\fi

We also obtained features from the \emph{Opportunity Insights Economic Tracker Data} \citep{economicTracker}, which is publicly available at \url{https://github.com/OpportunityInsights/EconomicTracker} (accessed on September $19$, $2020$). This database contains daily and state level information for several economic activity indicators such as credit and debit card expenditure across different types of activities, some of them stratified by  zip codes with low/medium/high income level. We refer to \cite{economicTracker} for more details. Additionally, we  downloaded Google mobility data, publicly available at \url{https://www.google.com/COVID19/mobility/} (accessed on September $19$, $2020$). These data include indicators of  mobility pattern changes in different areas such as parks, residential locations, retail locations, among others; and information such as the percent change in revenue as well as total number of small businesses for high/low income consumers and across different economic activities; see Table \ref{table:variables} for a complete listing.

{\tiny

\begin{longtable}{|r||l|}
\hline \hline
Item & Feature description\\
\hline \hline
1 & cases per million\\
\hline
2 & ratio of effective positive results to effective total tests\\
\hline 
3 & ratio of effective positive results to state population in $2019$\\
\hline
4 & individuals who are currently hospitalized with COVID-19\\
\hline
5 & spending in all categories \\
\hline
6 & spending in arts, entertainment, and recreation categories \\
\hline
7 & spending in accommodation and food service categories\\
\hline
8 & spending in general merchandise stores and apparel and accessories categories\\
\hline
9 & spending in grocery and food store categories\\
\hline
10 & spending in health care and social assistance categories\\
\hline
11 & spending in transportation and warehousing categories\\
\hline
12 & spending among high (top quartile) income ZIP codes in all categories\\
\hline
13 & spending among low (bottom quartiles) income ZIP codes in all categories\\
\hline
14 & spending among middle (middle two quartiles) income ZIP codes in all categories\\
\hline
15 & time spent at retail and recreation locations\\
\hline
16 & time spent at grocery and pharmacy locations\\
\hline
17 & time spent at parks\\
\hline
18 & time spent at inside transit stations\\
\hline
19 & time spent at work places\\
\hline
20 & time spent at residential locations\\
\hline
21 & time spent outside of residential locations\\
\hline
22 & number of small businesses open\\
\hline
23 & number of small businesses open among high(top quartile) income ZIP codes\\
\hline
24 & number of small businesses open among low(bottom quartile) income ZIP codes\\
\hline
25 & number of small businesses open among middle(middle two quartiles) income ZIP codes\\
\hline
26 & number of small businesses open in transportation\\
\hline
27 & number of small businesses open in education and health services\\
\hline
28 & number of small businesses open in leisure and hospitality\\
\hline
29 & net revenue for small businesses\\
\hline
30 & net revenue for small businesses among high(top quartile) income ZIP codes\\
\hline
31 & net revenue for small businesses among low(bottom quartile) income ZIP codes\\
\hline
32 & net revenue for small businesses among middle(middle two quartiles) income ZIP codes\\
\hline
33 & net revenue for small businesses in transportation\\
\hline
34 & net revenue for small businesses in education and health services\\
\hline
35 & net revenue for small businesses in leisure and hospitality\\
\hline
\caption{List of all variables considered (see Data Description Section for further details).}
\label{table:variables}
\end{longtable} 
}

We briefly describe a few features below and  refer to the Economic Tracker source \citep{economicTracker} for further details:
\begin{itemize}
    \item `\text{spend all}': Seasonally adjusted credit and debit card spending relative to the period Jan $4$ to Jan $31$, $2020$, in all merchant categories, reported  as a seven day moving average formed by using the values for the current and the previous six days.
    \item `\text{spend tws}': Seasonally adjusted credit and debit card spending relative to the period Jan $4$ to Jan $31$, $2020$, in the category transportation and warehousing, and also reported as a seven day moving average.
    \item `\text{spend all inchigh}': Seasonally adjusted credit and debit card spending by consumers living in high median income (above $75\%$ quantile) zip codes, and relative to the period Jan $4$ to Jan $31$, $2020$, in all merchant categories, again reported as a seven day moving average.
    \item `\text{revenue all}': Seasonally adjusted percent change in net revenue for small businesses and indexed to the period Jan $4$ to Jan $31$, $2020$, reported as a seven day moving average.
    \item `\text{revenue ss70}': Seasonally adjusted percent change in net revenue for small businesses in leisure and hospitality and indexed to the period Jan $4$ to Jan $31$, $2020$, reported as a  seven day moving average.
    \item `\text{merchant inchigh}': Seasonally adjusted percent change in the number of small businesses open in high median income (above $75\%$ quantile) zip codes and indexed to the period Jan $4$ to Jan $31$, $2020$,  reported as a seven day moving average.
    \item `gps parks`: Corresponds to the time spent at parks relative to a baseline which is defined as the median value, for the corresponding day of the week, during the period Jan $3$--Feb $6$, $2020$.
\end{itemize}

We employ local linear regression 
to slightly smooth the feature trajectories; this also  allows to impute missing observations for some states and days. 
For this step, a  Gaussian kernel with bandwidth $1.5$ days was employed  and we utilized the R package \texttt{fdapace} version 0.5.5 for the computational implementation \citep{fdapace}. The effective test positivity rate was smoothed analogously. We use local quadratic regression \citep{fan:96} to obtain the derivatives $\case'(\tm)$, implemented by using the R package \texttt{locfit} version 1.5-9.4 \citep{locfit}. For data analysis we consider states with population at least 1 million as the states with smaller populations had less COVID-19 cases, and their inclusion would therefore increase the overall noise level in the data and negatively impact the analysis. This is a pragmatic choice and is not a limitation of the proposed method.  
In the end, we included the data from 44 states (excluding Alaska, Delaware, North and South Dakota, Vermont and Wyoming). 



As a result of the LASSO variable selection described in Section 4.3,
which was implemented with the R package \texttt{ncvreg} version 3.12.0 \citep{ncvreg-pkg} with threshold  $\psel^*=0.3$ in \eqref{eq:varSel}, the features utilized in the final model are as follows, where the selected lags are listed in Table \ref{table:lags}.
\begin{enumerate}
    \item $X_{i}(t)$, the total cases per million at time $t$ for state $i$, $i=1,\dots,44$.
    \item $U_{1i}(t)$, the relative time spent at park areas at time $t$ in state $i$.
    \item $U_{2i}(t)$, the number of hospitalized people at time $t$ in state $i$.
    \item $U_{3i}(t)$, the ``positive testing rate" defined as the ratio of effective positive results (positive test or probable case) to the effective total tests (positive or probable case plus negative test results) at time $t$ for state $i$.
    \item $U_{4i}(t)$, the credit/debit card relative spending in the arts, entertainment, and recreation category (see `spend aer' predictor in Table~\ref{table:variables}) at time $t$ in state $i$.
\end{enumerate}

\begin{table}[H]
\centering
 \begin{tabular}{c|c| ccc } 
 \hline
 predictor & variable && lag\\ [0.5ex] 
 \hline\hline
 
  park mobility activity & $U_1(t)$ && 0 &\\
 \hline
 
  number of people currently hospitalized & $U_2(t)$&& 14 &\\
 \hline

effective test positivity rate & $U_3(t)$ && 7 &\\
 \hline

 \makecell{credit/debit card spending on\\ arts, entertainment, and recreation}& $U_4(t)$ & &3 &\\
 \hline
\hline
\end{tabular}
 \caption{Predictors and their corresponding lags (see Data Description Section for further details).}
 \label{table:lags}
\end{table}

\subsection{Model Fitting and Results}

In order to study the delay time dynamics of COVID-19 in the United States, we focus on the state-specific time-varying cumulative confirmed cases per million people, $\case(\tm)$, for $\tm\in\tdom$, which is referred to as the \emph{case process} in the following. 
Here, the time domain $\tdom$ that we consider starts on April 5 and ends on  August 12 in  2020, where data are recorded once per day, on an equidistant grid for 130 days. 
{We estimate the history coefficient function $\hislope$ in the RDED model  \eqref{eq:fcreg} as described in Section \ref{sec:varSelect} with $\lag_{0} = 14$, and} adopt the functional varying-coefficient regression in model \eqref{eq:fcreg} with $\nprdt=34$ other variables as listed in Table~\ref{table:variables}. {The estimated coefficient surface $\hat{\gamma}(s, t)$ and some of its cross-sections  are displayed in Figure~\ref{fig:surface}. 

For each fixed $t$, $\hat{\gamma}(\cdot, t)$, the estimated history index in model \eqref{eq:fcreg} is found to change  from positive near current time to negative with increasing lags. This means that the recent past of the process has a positive association with the current derivative while days further into the past are negatively associated. The right panel shows that the shape of $\hat{\gamma}(\cdot, t)$ changes from more or less quadratic to linear from the early days of COVID-19 to recent times, indicating that in April 2020 the contrast between the effects of near-current and past caseload  on the current derivative was more pronounced than it is more recently. 

\begin{figure}[htbp]
    \centering
    \subfloat[]{
    \centering
    \includegraphics[width=0.5\textwidth]{./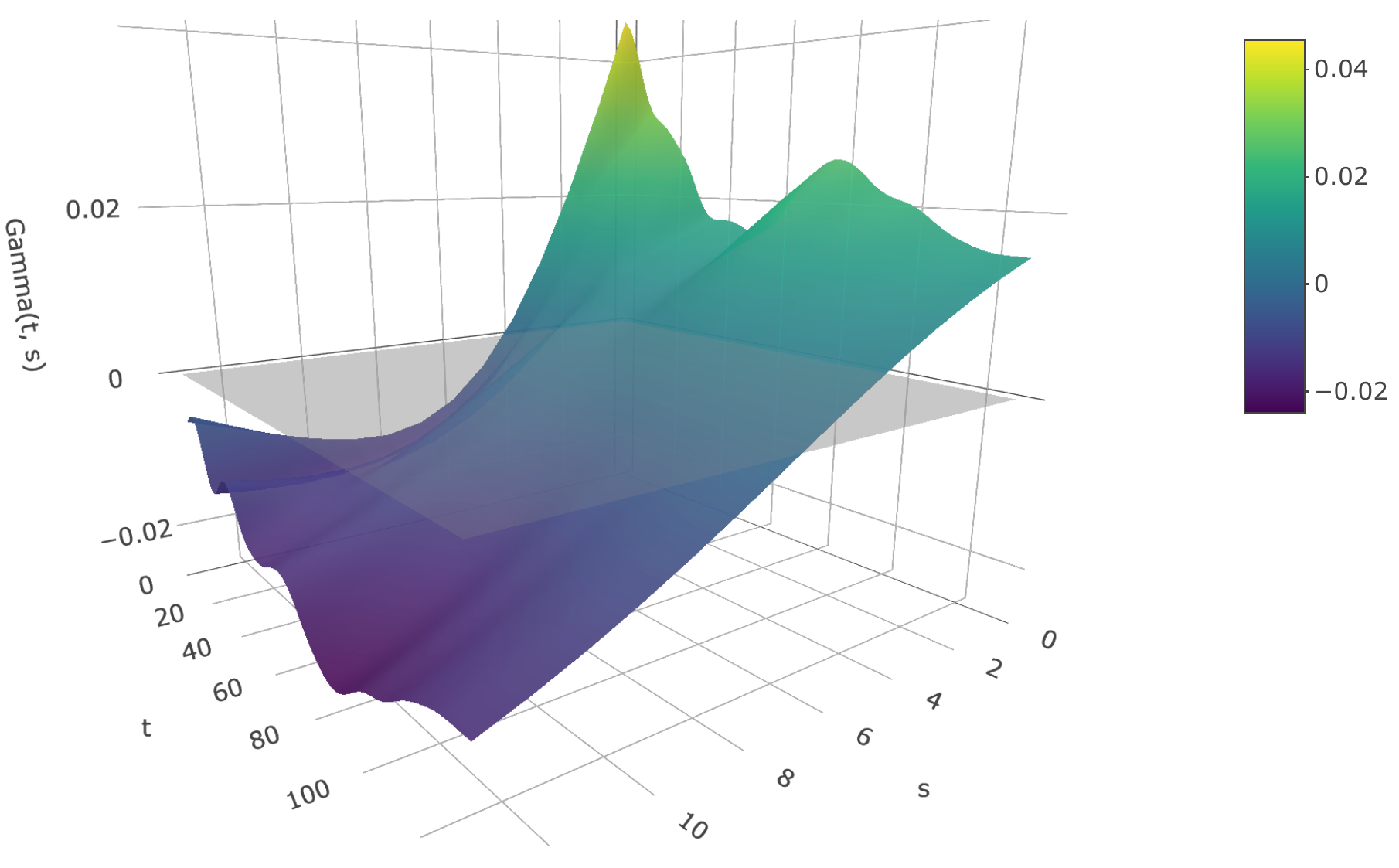}
    }
    \subfloat[]{
    \centering
    \includegraphics[width=0.5\textwidth]{./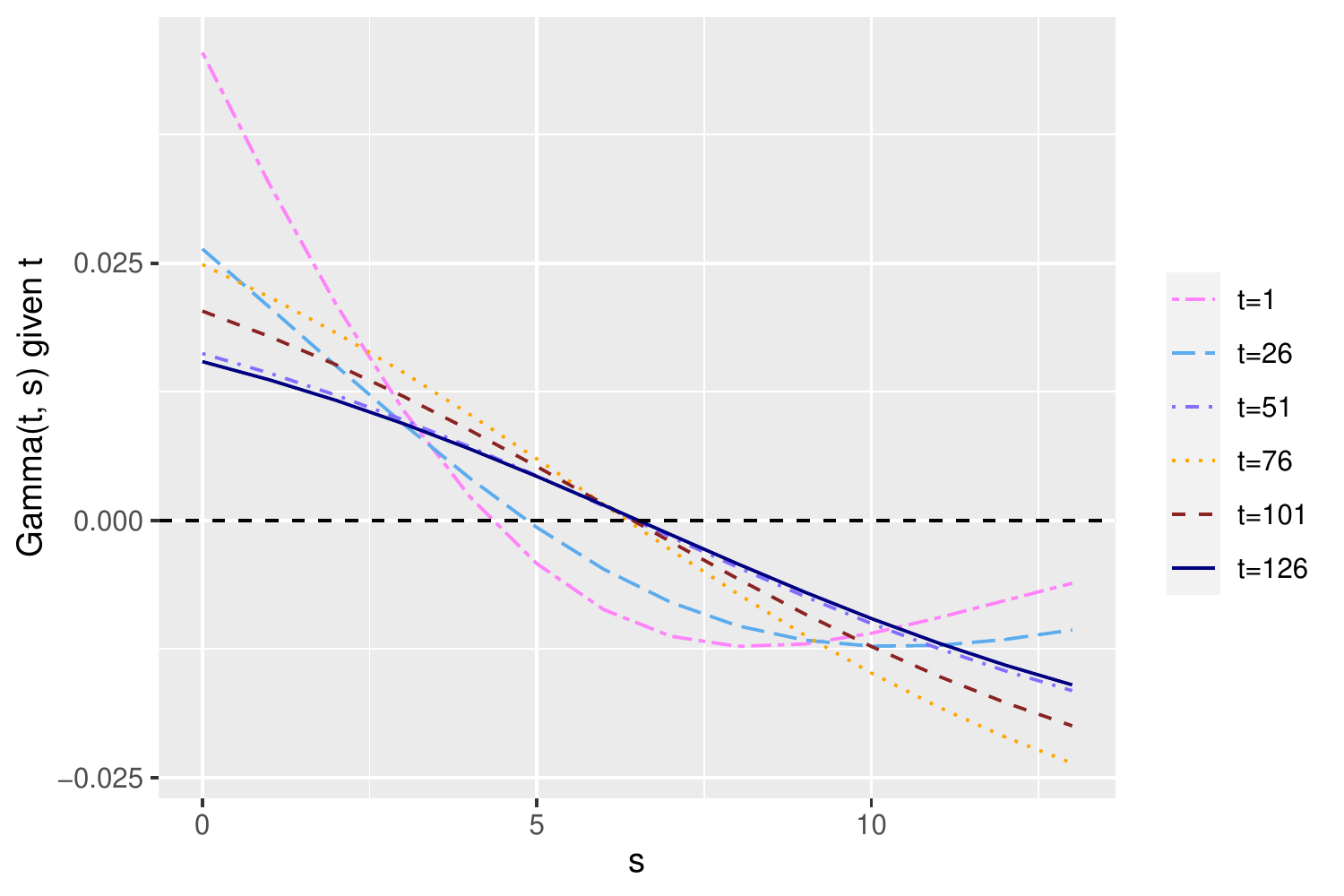}
    }
\caption{(a) Estimated coefficient surface $\hat{\gamma}(s, t)$ in the RDED  model  \eqref{eq:fcreg}, 
where $t$ corresponds to number of days since 2020-04-05 and $s$ takes values from $0$ to $\tau_0$.
(b) $t$-sliced cross Sections for  $t = 1, 26, 51, 76, 101, 126$ of $\hat{\gamma}(s, t)$ as functions of  $s$.}
\label{fig:surface}
\end{figure}

\begin{figure}[htbp]
	\centering
	\includegraphics[width=\linewidth]{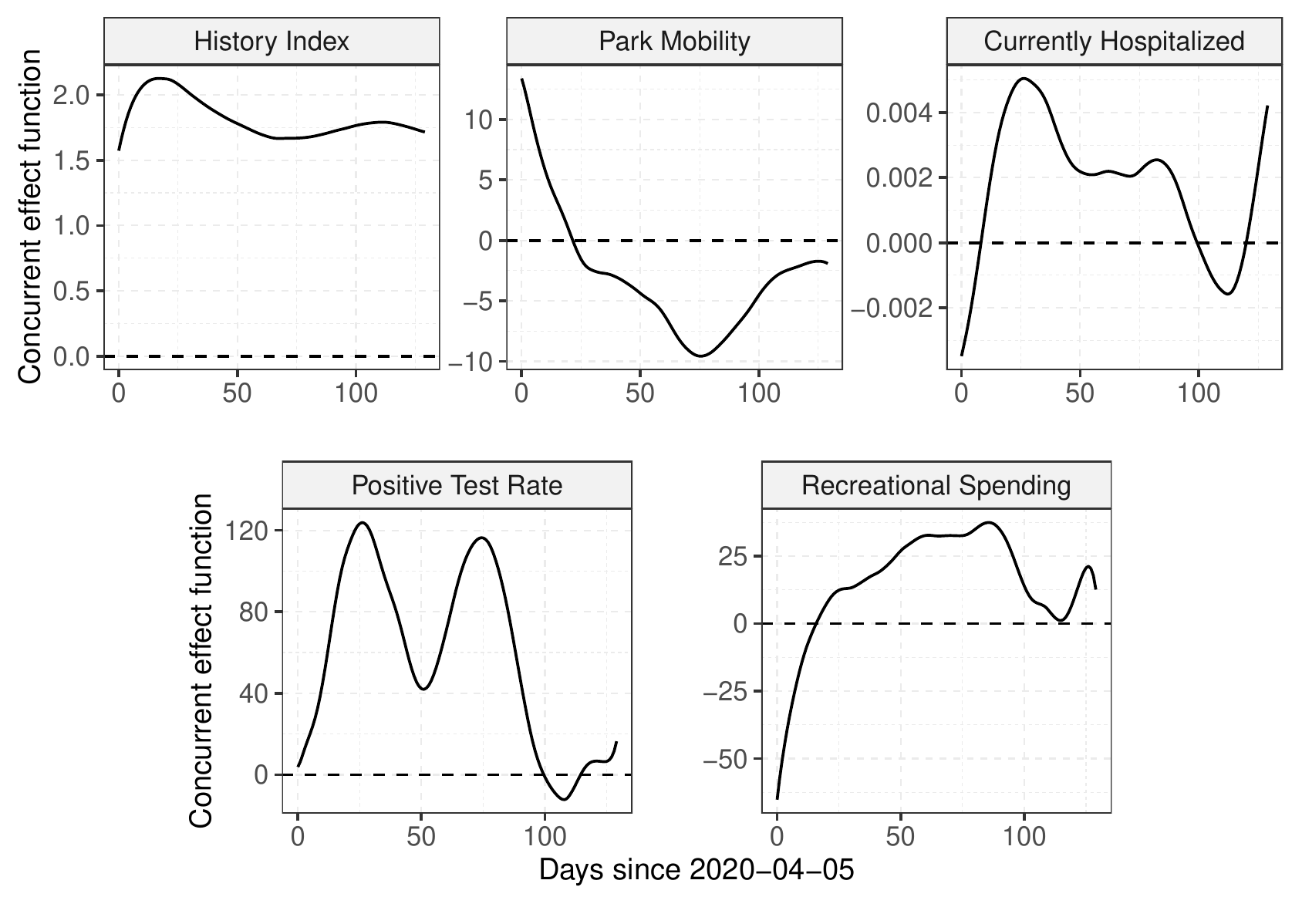}
	\caption{Concurrent effect functions $\hat{\beta}_j$ for predictor processes $U_j$ in the RDED \eqref{eq:fcreg}. 	
	}
	\label{fig:beta}
\end{figure}

\subsection{Predictor process effects on growth rate}

The results are visualized in Figure~\ref{fig:beta}. States which are suffering from faster spread of the virus are characterized by higher growth rates.  The historically weighted integrated case count per million has a positive effect on the growth rate. This  reflects dynamic explosive behavior \citep{mull:10}, so 
that states  will tend to have caseloads that move even further away  from the mean caseload per million across states as time moves forward, which could be either moving further above the average across states or further below it.  Thus the positive effect of the history index  reflects increasing  variability in the caseloads across states. 
Potential reasons for this are the distinct policies that states implement in terms of mobility, business and social restrictions, as well as cultural acceptance of such restrictions. Spatial infectivity dynamics with waves of infections moving spatially across the U.S. also might play a major role. 

In terms of the effects of covariate processes, our results indicate that higher park mobility during late spring and summer (May onwards) is associated with a decrease in growth rates. For each percent increase in park mobility, the growth rate decreased by as much as 10 cases per million per day, with the effect reaching its maximum in mid-summer.  This indicates negligible risk and potential benefits from outdoors recreation activities. 

The other covariate processes that were selected by LASSO as predictors in the RDED model \eqref{eq:fcreg} are patient counts at hospitals, the testing positivity rate, and credit card spending on recreational activities, including dining at restaurants. All of these were found to  have positive effects on the growth rate throughout nearly the entire time domain. These effects were included in model \eqref{eq:fcreg} with covariate process-specific lags, as per Table~\ref{table:lags}. The number currently hospitalized was found to be most predictive for the case  growth rate when including a  two-week lag, whereas for  test positivity rate a one-week lag was found to be most predictive. This may suggest that individuals who are infected by individuals with less severe symptoms (i.e. a non-hospitalized COVID-positive person) tend to learn of their disease one week after their infector did. 

The two-week difference for hospitalized individuals is likely a reflection of the time needed to develop serious symptoms which warrant admission for medical care. The effect of recreational spending on viral spread was much more immediate; the optimal lag in this case is only 3 days. This suggests that states with higher recreational economies during the pandemic see an almost immediate uptick in growth rate, with cases increasing by as much as 25 infections per million per day for each percent of additional recreational spending. 



\subsection{Predictive performance of the random differential equation with delay model}

Figures~\ref{fig:fit1}~and~\ref{fig:fit2} display the fitted growth rates provided by the RDED model  \eqref{eq:fcreg},  with covariate specific lags, and  also the results of fitting a differential equation model where the covariates do not have individual lags, i.e. a concurrent RDE model given by  \eqref{eq:fcreg} in which $\tau_j=0, j=0,1,2,3,4$. Fitted curves are the result of leave-one-out predictions for each state, where the data from the left-out state were not used for fitting the model and then the state's data are plugged into the fitted model to obtain the predicted case growth rate.  These predictions are compared against the actual observed growth rates as obtained by local polynomial regression (see Section \ref{sec:derivest} and Section A.3 of the Appendix).
\begin{figure}[H] \vspace{-1in}
	\centering
	\includegraphics[width = 1.03\linewidth]{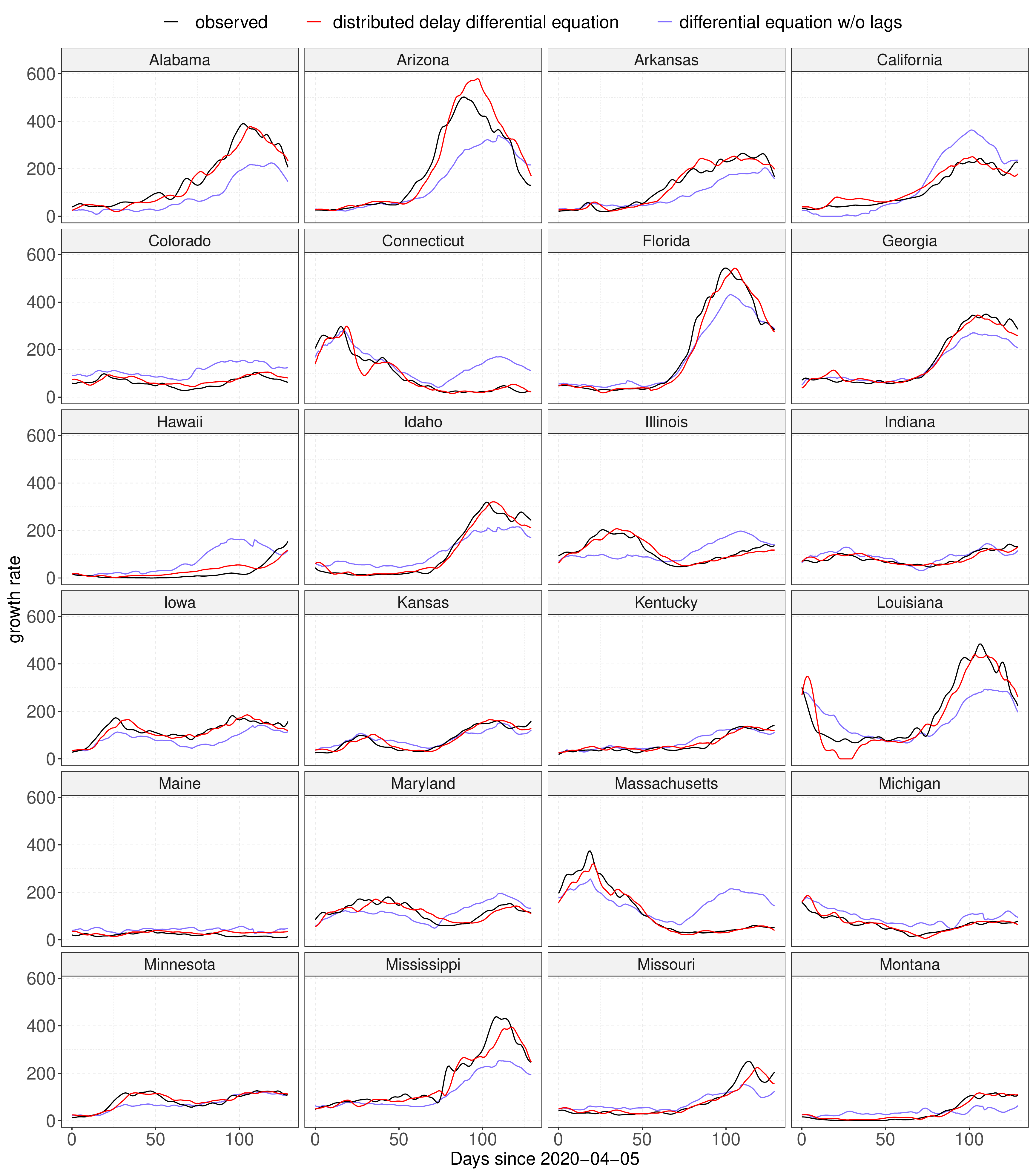}
	\caption{Predicted  growth rates by the proposed  RDED  \eqref{eq:fcreg} (red) and by a RDE without lags (blue) for US States, Alabama - Montana. Observed rates are the black curves.}\label{fig:fit1}
\end{figure}
\begin{figure} \vspace{-1.75in}	\centering
	\includegraphics[width = 1.03\linewidth]{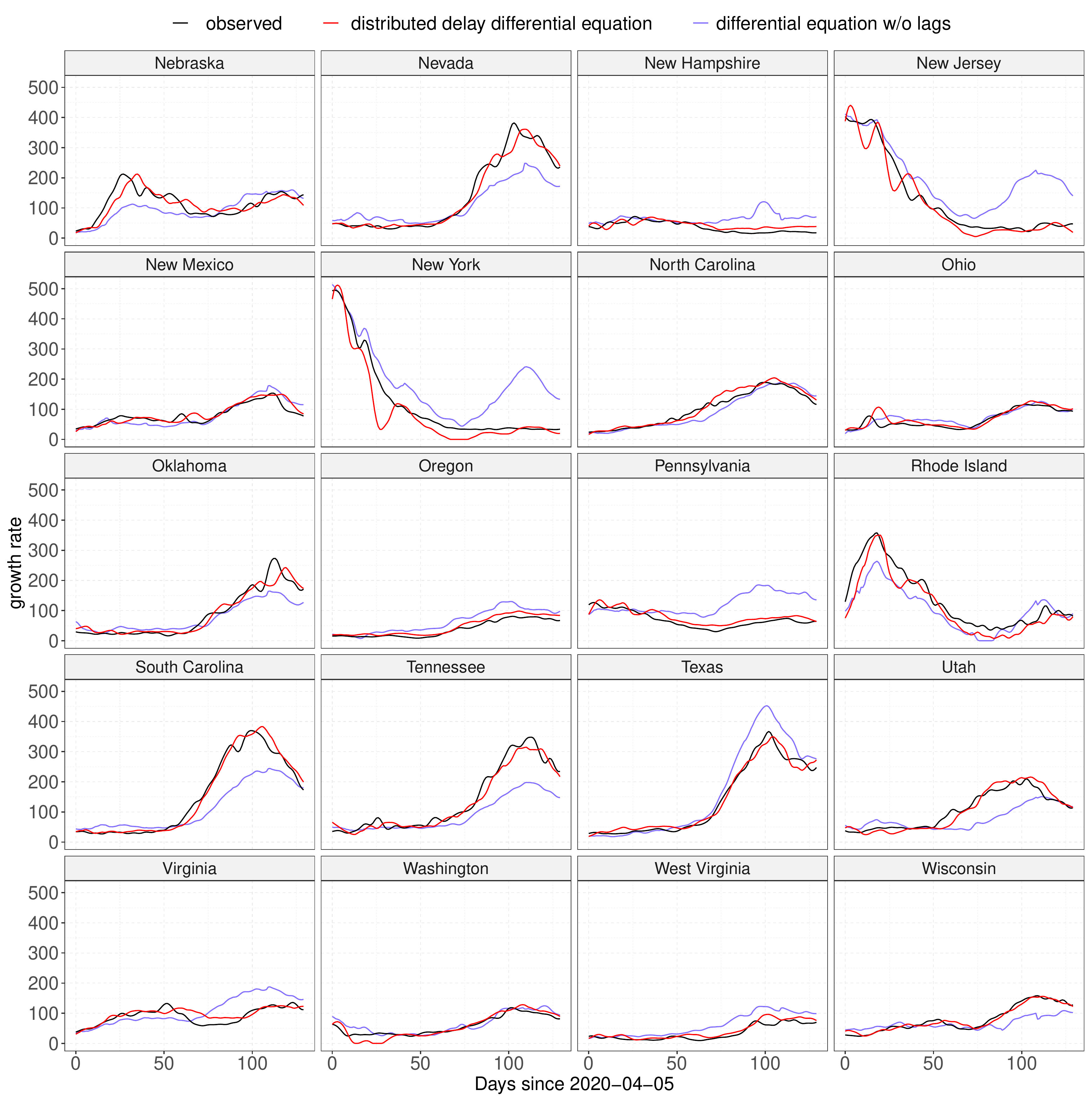}
	\caption{Predicted  growth rates by the proposed  RDED  \eqref{eq:fcreg} (red) and by a RDE without lags (blue) for US States, Nebraska - Wisconsin. Observed rates are the black curves.}	
\label{fig:fit2}
\end{figure}
Visual inspection (as well as measures of goodness-of-fit, such as  integrated mean squared prediction error) indicates that the incorporation of delays with predictor specific lags in the model substantially  improves the fits. This provides compelling evidence for modeling COVID-19 growth rates with RDED that include covariate-specific lags. A vanilla RDE without predictor delays does not incorporate historical information beyond the previous day and struggles to explain deviations in phase (as in, e.g. Illinois). The problem of modeling phase variation is instead foisted upon the coefficient functions, which are ill-equipped to reflect it, and thus at times the straight RDE  model without predictor delays falters in terms of amplitude modeling as well (e.g. for Colorado).
In contrast, the proposed RDED incorporates past information through the included delays and thus reflects  phase variability in a natural way, especially through the inclusion of distributed delays via the history index. 
Including a two-week history of case counts through the history index function $\gamma$ in model \eqref{eq:fcreg} substantially enhances the flexibility of the model.  The estimates of coefficient functions are no longer uncontrollably altered  by the effects of unaccounted lags, and as a result the fitted model  performs much better.

\subsection{Model-benchmarked evaluation of performance}

A comparison of observed  vs. RDED model-predicted growth rates allows one to assess the performance of states in terms of controlling spread. A state whose observed growth rate falls below the predicted curve corresponds to a negative residual and suggests better-than-expected viral control, and vice versa for observed rates which fall above the predicted curve. Figures \ref{fig:res1} and \ref{fig:res2} display the residual curves for the fitted  RDED model  \eqref{eq:fcreg}, where positive residuals correspond to the number of excess new cases per day relative to the predictions obtained from fitting  model  \eqref{eq:fcreg}, as described above, including the  two-week history index and other covariate processes  with their respective lags. These residuals reflect the unexplained residual stochastic process $Z$ that is an integral and principally unpredictable part of the RDED model 
 \eqref{eq:fcreg} on which our predictions are based.
 
{Major outbreaks, which can be interpreted as an unforeseen amount of excess cases, are revealed through positive spikes in the residual plots. For example, New York's residuals leap up to as high as 170 excess cases per million per day during a spike in Spring.  Arizona's summer outbreak is captured in the residuals as well, subsequently course-correcting with a streak of fewer-than-expected new cases around day 100. These spikes also suggest that in the two weeks prior to the uptick, there were substantially more cases in the state population than the numbers reported in the JHU dataset.}

{The consistency of a state's handling of the virus can be seen through the volatility of the residual curve. States like Colorado and Oregon exhibit only minor deviations from zero, which suggests their new cases are quite  predictable given a two-week history of the state case counts and covariate processes.  Some states, particularly in the Northeast, including  New York, New Jersey, and Rhode Island,  only exhibit high volatility during the early days of the pandemic before achieving a period of stability. On the other hand, states such as  Alabama and Louisiana exhibit less-predictable case counts throughout. 


\subsection{Computation times and code availability}
The codes for reproducing the above analysis are available at \url{https://osf.io/w48zd/?view_only=7cd3e2c686a74fd086e4eae8534214c0}. The run times of the different steps in the analysis using R version 3.6.3 (2020-02-29) running under CentOS Linux 7 (Core)  on x86\_64-pc-linux-gnu (64-bit) platform are summarized in Table \ref{table:runtime}. 

\begin{figure}[htbp]
	\centering
	\includegraphics[width = 1.05\linewidth]{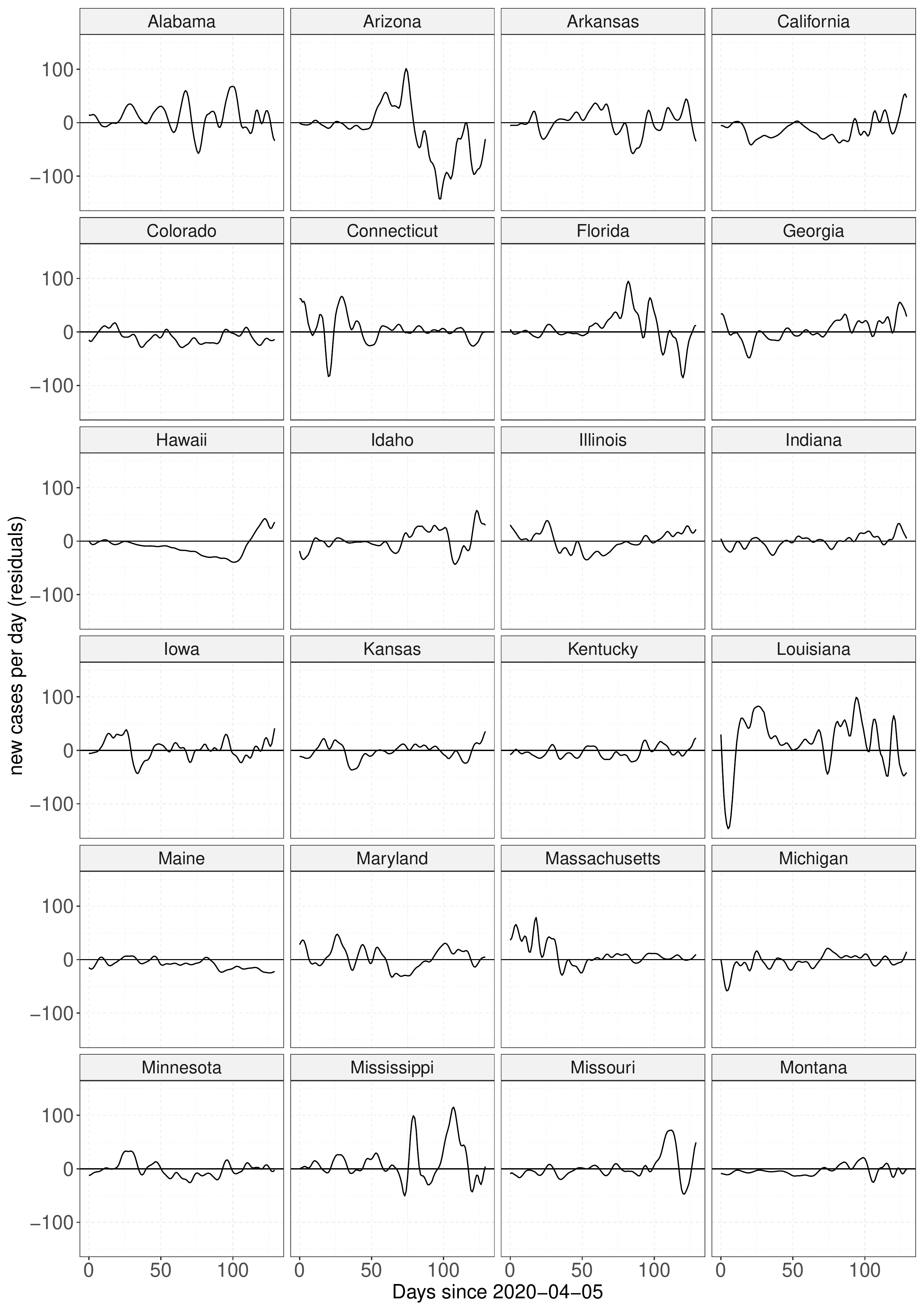}
	\caption{Residuals for the predictions from the RDED  \eqref{eq:fcreg} for US States, Alabama - Montana.}\label{fig:res1}
\end{figure}

\begin{figure}[htbp]
	\centering
	\includegraphics[width = 1.05\linewidth]{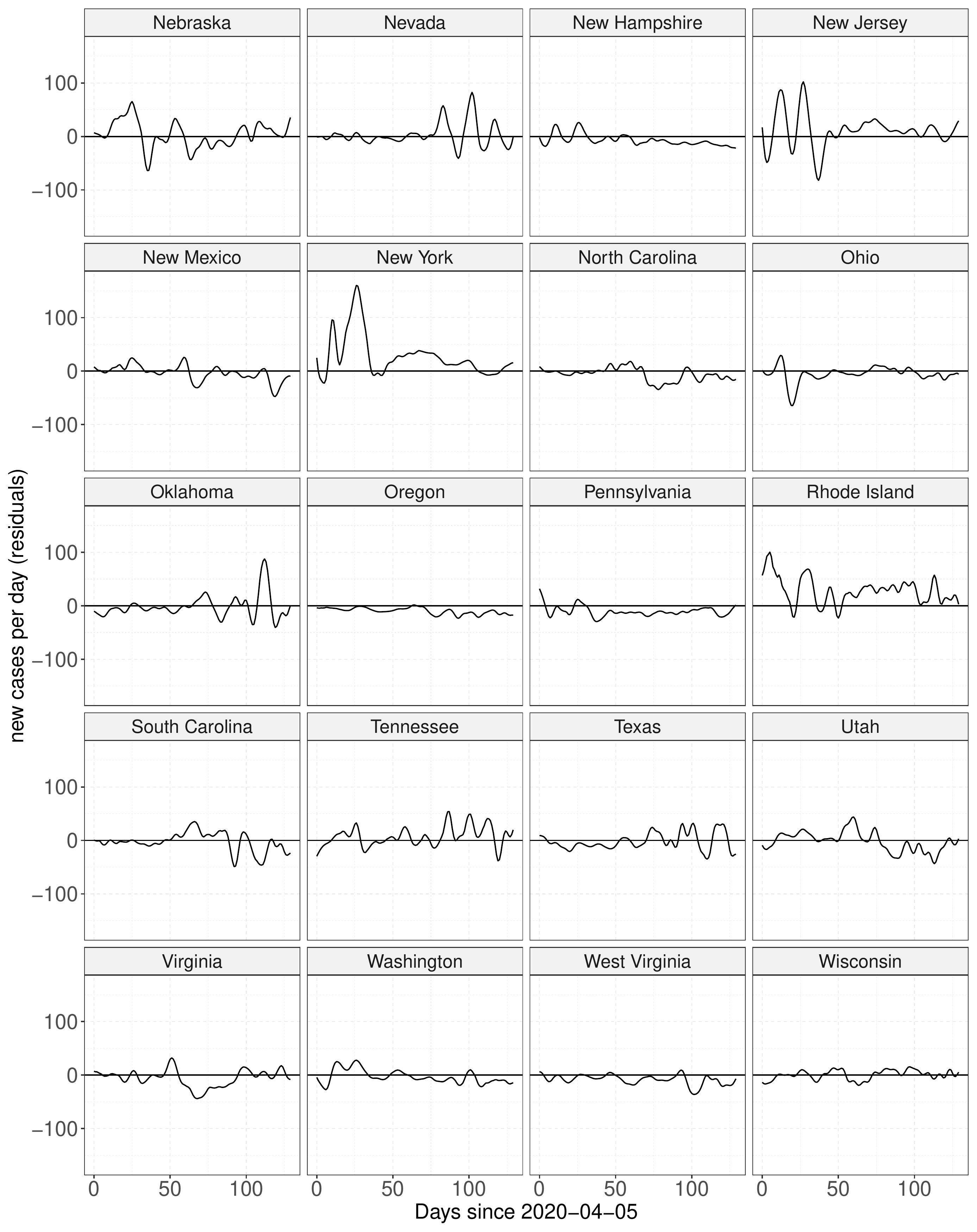}
	\caption{Residuals for US States, Nebraska -  Wisconsin.}\label{fig:res2}
\end{figure}

\begin{table}
\begin{tabular}{|l|l|l|l|}
\hline
Step&Description&Number of Cores&Time in Seconds \\ \hline
1&preprocessing&single core                     & 9.475           \\ \hline
2&initial lag selection&8 cores in parallel             & 422.505         \\ \hline
3&history index function estimation&single core & 3.414           \\ \hline
4&lasso&8 cores in parallel                             & 31.316          \\ \hline
5&backfitting&single core                       & 2565.715        \\ \hline
6&smoothing&single core                         & 0.012           \\ \hline
&Total&                         & 3032.437           \\ \hline
\end{tabular}
\caption{Run time for the different steps in the data analysis.}
\label{table:runtime}
\end{table}

\section{Discussion and concluding remarks}

In this paper we propose dynamics learning exemplified by learning a RDED from a sample of observed trajectories, which is motivated by modeling COVID-19 daily new cases from the previous behavior of the case trajectory itself through a distributed delay or history index component and additional covariate process components.   RDEDs form the driving mechanism of  real life processes that include a  feedback loop  into the future. Depending on the application and the exact nature of the feedback, one might incorporate discrete delays or distributed delays. While discrete delays borrow information from some isolated times in the past, a distributed delay takes into account the entire continuum in the recent past and turns out to be closely related to a functional history index model that has been considered in the statistics literature. 

In the current framework, we have not included  inference for the estimated model coefficient functions and the lags. It is left for future research to investigate the convergence of these estimated parameters to their population targets and to obtain guarantees on the  convergence rates. Some words of caution are in order. Any method that includes a time delay component suffers from the limitation that a larger set of possible lags means one has to sacrifice a part of the data that are available for the analysis, due to the initialization.   We have assumed that the trajectories corresponding to the different states are i.i.d, realizations, which may not hold if there is significant spatial correlation based on geographic and demographic similarities across the states. It would be interesting to study adaptations of the proposed methodology when the samples are correlated instead of being independent. From an application point of view, a different and perhaps more refined perspective could potentially be gained by applying the proposed method at a finer level to county level data.

Establishing existence and uniqueness of the solutions of RDEDs has found increasing interest in recent years \citep{cala:19,cort:20}. We contribute here to the forward problem of obtaining solutions of an RDED by targeting the inverse problem analytically and combine this with functional data analysis approaches to estimate the parameter functions from an observed  sample of multivariate stochastic processes for both discrete and distributed delay setups. Our approach builds on and contributes to empirical dynamics and the literature on inferring dynamics and differential equations  from functional data, and extends it  to models that  include time delays.  

A key feature of the proposed approach  is the statistical estimation of the model parameters in the population RDED, which has inbuilt model selection steps that aid in  estimating the optimal lags and also to select the important predictors through the lasso pruning steps, with majority voting across all time points. This reduces the number of predictors in the model and thus its complexity before the optimization step of the lags that are associated with the predictors. 
To handle this complex lag optimization step,  instead of a brute force search method, we adopted a backfitting algorithm, which significantly simplifies the optimization step and also leads to faster convergence. We provide a complete toolkit that starts with a sample of multivariate trajectories and extracts from the observed data an estimated RDED that best explains the dynamics inherent in the data, along with automatic variable selection and delay estimation.

The application of our method to the modeling of COVID 19 growth rates in the United States demonstrates excellent performance in terms of predictions of the trajectories. The predictions from the random differential equation model without predictor lags grossly overestimates the COVID-19 growth rates for significant periods of time in New York, New Jersey, Pennsylvania, Massachusetts, Connecticut, Virginia, West Virginia, New Hampshire, California, Texas, Colorado, Oregon, Washington, Maine and Hawaii but underestimates the growth rates for Alabama, Arizona, Arkansas, Florida, Louisiana, Nebraska, Nevada, South Carolina, Tenessee and Wisconsin; see Figures \ref{fig:fit1} and \ref{fig:fit2}. This highlights the importance  of incorporating time lags in differential equation modeling of functional data, when one has a sample of realized stochastic processes, specially when the underlying processes tend to have an aftereffect. 


\appendix

\renewcommand{\thesection}{A.\arabic{section}}

\section*{Appendix}


\section{Proof of the existence of a solution for the model in \texorpdfstring{\eqref{CONC:DEF}}{}}
\begin{proof}[Proof of continuity of $f$]
The unique solution of the first order  DDE in \eqref{CONC:DEF} can be shown to exist as a direct extension of Theorem 2.2 of \cite{cala:19}.

We first show that $f(y,t)$ is $p^{\text{th}}$-moment continuous,
using assumptions \ref{ass:A1} - \ref{ass:A3}  in Section \ref{model:conc}. Observe that
\begin{align}
\label{conc:proof1}
&(E|f(y_m,t_m) - f(y,t)|^p)^{1/p} \nonumber\\ 
& = ( E|\alpha(t_m) -  \alpha(t) \ + \beta_0(t_m)y_m - \beta_0(t)y \ + Z(t_m)-Z(t) \nonumber\\ & \ + \sum_{j=1}^J\beta_{j}(t_m)U_j(t_m-\tau_{j}) -  \sum_{j=1}^J\beta_{j}(t)U_j(t-\tau_{j})|^p)^{1/p}\nonumber \\
& \leq |\alpha(t_m) -  \alpha(t)|\ + ||\beta_0(t_m)y_m - \beta_0(t)y||_p + ||Z(t_m)-Z(t)||_p \nonumber\\ & \ + \sum_{j=1}^J ||\beta_{j}(t_m)U_j(t_m-\tau_{j}) -  \beta_{j}(t)U_j(t-\tau_{j})||_p.
\end{align}
The first and third terms in \eqref{conc:proof1} converge to  $0$ using assumptions \ref{ass:A1} and \ref{ass:A2} respectively. For the second and fourth terms of \eqref{conc:proof1}, 
\begin{align}
\label{conc:proof2}
    &||\beta_0(t_m)y_m - \beta_0(t)y||_p \nonumber\\
    &\leq ||\beta_0(t_m)\ y_m - \beta_0(t_m)\ y||_p \ + ||\beta_0(t_m) \ y - \beta_0(t)\ y||_p \nonumber \\
    &\leq |\beta_0(t_m)| \ ||y_m -y ||_p \ + |\beta_0(t_m) - \beta_0(t)|\ ||y||_p.
 \end{align}
By assumption \ref{ass:A1}, $\beta_0(t_m)$ is continuous on the compact domain $\mathcal{T}$, and hence uniformly bounded. Also, by assumption $y_m \rightarrow y$, in the $p^{\text{th}}$-moment. Since $\Lp$, the space of random variables with finite $p^{\text{th}}$ moments, is a Banach space, it follows that $||y||_p := (E|y|^p)^{1/p} < \infty.$ Thus \eqref{conc:proof2} converges to $0$. Next
\begin{align}
\label{conc:proof3}
&\sum_{j=1}^J ||\beta_{j}(t_m)U_j(t_m-\tau_{j}) -  \beta_{j}(t)U_j(t-\tau_{j})||_p \nonumber\\
&\leq \sum_{j=1}^J ||\beta_{j}(t_m) \ (U_j(t_m-\tau_{j}) - U_j(t-\tau_{j})) \nonumber \\ & \hspace{1cm} + (\beta_{j}(t_m) - \beta_{j}(t)) \ U_j(t-\tau_{j})||_p \nonumber \\
&\leq \sum_{j=1}^J |\beta_{j}(t_m)| \ ||U_j(t_m-\tau_{j}) - U_j(t-\tau_{j})||_p \nonumber \\ & \hspace{1cm} + |\beta_{j}(t_m) - \beta_{j}(t)| \ ||U_j(t-\tau_{j})||_p,
\end{align}
and \eqref{conc:proof3} converges to $0$ using the assumptions on the continuity of the coefficient functions $\beta_{j}(\cdot)$ on the compact domain $\mathcal{T}$. This implies the uniform boundedness of $\beta_{j}$
and the continuity of the predictor processes $U_{j}(\cdot) \in \Lp$  in the $p^{\text{th}}$- moment for $j = 1,\dots, J.$ 
Hence the proof of continuity of $f(y,t)$ follows immediately by combining \eqref{conc:proof1}, \eqref{conc:proof2}, and \eqref{conc:proof3}.
\end{proof}

\begin{proof}[Proof of Proposition \ref{thm1}]
Let $k(t) = \beta_0(t) ,\ t \in \mathcal{T}$ Since, by assumption \ref{ass:A1},  $\beta_0(\cdot)$ is continuous on a compact domain, we have $\int_{t_0}^T |k(t)| <\infty.$
For $y,v \in \Lp$ and $t \in \mathcal{T}$, we obtain
\begin{align}
    ||f(y,t) - f(v,t)||_p & = ||\beta_0(t) \ (y-v)||_p \leq |\beta_0(t)| ||y-v||_p.
\end{align}
Since $f$ is $p^{\text{th}}$-moment continuous, the proof follows using the same line of arguments as in \cite{cala:19}.
\end{proof}

\section{Proof of the existence of a solution for the model in  \texorpdfstring{\eqref{histIndex1}}{}}
For ease of presentation, we will begin by showing the existence of the unique solution of  the first order RDED \eqref{histIndex1} with one predictor process $U(\cdot)$, rewritten as in \eqref{histIndex1:Calatayud2}. 
The arguments can then be easily extended to the case of multiple predictor processes, and we omit the details.

\begin{proof}[Proof of Proposition \ref{prop:cont:hist}]
Recall that $t_m \in [t_0,b]$ is a real sequence converging to $t\in [t_0,b]$ and $Y$ is an $\Lp$-continuous process, so that
 \begin{align}
\label{hist1:proof1}
&||\tilde{f}(Y,t_m) - \tilde{f}(Y,t)||_p \nonumber\\
= & ||\alpha(t_m) - \alpha(t)   +   \int_{t_m-\tau_0}^{t_m}\gamma(t_m-s,t_m) Y(s)ds - \ \int_{t-\tau_0}^{t}\gamma(t-s,t) Y(s)ds \nonumber \\ + &  \ \int_{t_m-\tau_1}^{t_m} \gamma_1(t_m-s,t_m) U(s)ds  - \int_{t-\tau_1}^{t} \gamma_1(t-s) U(s)ds \nonumber +  Z(t_m) - Z(t)||_p\nonumber \\
\vspace*{3cm}
\leq & |\alpha(t_m) -\alpha(t)| + ||Z(t_m)- Z(t)||_p \nonumber \\
& \ + ||\int_{t_m-\tau_0}^{t_m}\gamma(t_m-s,t_m) Y(s)ds -  \int_{t-\tau_0}^{t}\gamma(t-s,t) Y(s)ds ||_p \nonumber \\ 
& \ + ||\int_{t_m-\tau_1}^{t_m} \gamma_1(t_m-s,t_m) U(s)ds -  \int_{t-\tau_1}^{t} \gamma_1(t-s,t) U(s)ds ||_p.
 \end{align}

Using assumptions \ref{ass:A1} and \ref{ass:A2}, respectively, the first and second term of \eqref{hist1:proof1} converge to $0$. As before, we consider the third and fourth terms separately, 
\begin{align}
    \label{hist:proof2}
&||\int_{t_m-\tau_0}^{t_m}\gamma(t_m-s,t_m) Y(s)ds -  \int_{t-\tau_0}^{t}\gamma(t-s,t) Y(s)ds ||_p \nonumber\\
\leq & ||\int_{t_m-\tau_0}^{t_m}\gamma(t_m-s,t_m) Y(s)ds -  \int_{t_m-\tau_0}^{t_m}\gamma(t-s,t) Y(s)ds ||_p \nonumber \\
& + || \int_{t_m-\tau_0}^{t_m}\gamma(t-s,t) Y(s)ds -  \int_{t-\tau_0}^{t}\gamma(t-s,t) Y(s)ds ||_p \nonumber \\
\leq & || \int_{t_m-\tau_0}^{t_m}(\gamma(t_m-s,t_m) - \gamma(t-s,t)) Y(s)ds ||_p \nonumber \\ 
& + \ || \int_{t_m-\tau_0}^{t_m}\gamma(t-s,t) Y(s)ds -\int_{t-\tau_0}^{t} \gamma(t-s,t) Y(s)ds ||_p.
\end{align}
For the first term in \eqref{hist:proof2},
\begin{align}
    \label{hist1:proof3}
&|| \int_{t_m-\tau_0}^{t_m}(\gamma(t_m-s,t_m) - \gamma(t-s,t)) Y(s) \ ds ||_p \nonumber \\
& \leq \int_{t_m-\tau_0}^{t_m} || (\gamma(t_m-s,t_m) - \gamma(t-s,t)) Y(s)||_p \ ds\nonumber \\
& \leq \int_{t_0-\tau_0}^{b} || (\gamma(t_m-s,t_m) - \gamma(t-s,t)) Y(s) ||_p \ ds\nonumber \\
& =\int_{t_0-\tau_0}^{b} |(\gamma(t_m-s,t_m) - \gamma(t-s,t))| \ || Y(s)||_p \ ds,
\end{align}
where the first inequality follows from the fact that the integrand is $p^{\text{th}}$- moment continuous, using assumption \ref{ass:B1} and the assumption that $Y(\cdot)\in \Lp$ (see \cite{soon:73} page 102(3)). The second inequality follows using the properties of $\Lp$ Riemann integration. Now, from assumption \ref{ass:B1}, we have that for any $\epsilon>0$ there exists $M_0(\epsilon)$ such that $|(\gamma(t_m-s,t_m) - \gamma(t-s,t))| < \varepsilon$ whenever $m \geq M_0(\epsilon)$. Thus, from \eqref{hist1:proof3} it follows that 
\begin{align}
    \label{hist1:proof4}
\int_{t_0-\tau_0}^{b} || (\gamma(t_m-s,t_m) - \gamma(t-s,t)) Y(s)||_p \ ds \leq \int_{t_0-\tau_0}^{b} \epsilon ||Y(s)||_p \ ds.
\end{align}

Since $||Y(s)||_p < \infty$ and $[t_0-\tau_0,b]$ is compact, to show that $\int_{t_0-\tau_0}^{b}||Y(s)||_p ds<\infty$ it is sufficient to prove that the application  $s\rightarrow ||Y(s)||_p$ is continuous on $[t_0-\tau_0,b]$. For this, let $s_m \rightarrow s$ as $m\rightarrow \infty.$ By assumption \ref{ass:B2} we have
\begin{align*}
    |||Y(s_m)||_p - ||Y(s)||_p| \leq ||Y(s_m) - Y(s)||_p \rightarrow 0 \text{ as } m \rightarrow \infty.
\end{align*}
Hence
\begin{align}
   \label{hist1:proof5}
  || \int_{t_m-\tau_0}^{t_m}(\gamma(t_m-s,t_m) - \gamma(t-s,t)) Y(s)ds ||_p \rightarrow 0, \text{ as } m \rightarrow 0.
\end{align}
For the function $\Psi(r) := \int_{r-\tau_0}^{r}\gamma(t-s,t) Y(s)ds$, $r \in [t_0,b]$,
\begin{align*}
     \Psi(r) = \int_{r-\tau_0}^{r}\gamma(t-s,t) Y(s)ds & \\= \int_{t_0-\tau_0}^{r}\gamma(t-s,t) Y(s)ds & - \int_{t_0-\tau_0}^{r-\tau_0}\gamma(t-s,t) Y(s)ds\\ =
    \int_{t_0-\tau_0}^{r}\gamma(t-s,t) Y(s)ds & - \int_{t_0}^{r}\gamma(t-u+\tau_0,t) Y(u-\tau_0)du.
\end{align*}
From \cite{soon:73} page 103(5), each term in the RHS of the above equation is $p^{\text{th}}$-moment continuous. Hence, by definition, for $t_m \rightarrow t,\ ||\Psi(t_m)- \Psi(t)||_p = || \int_{t_m-\tau_0}^{t_m}\gamma(t-s,t) Y(s)ds -\int_{t-\tau_0}^{t} \gamma(t-s,t) Y(s)ds ||_p \rightarrow 0,$ and the second term in \eqref{hist:proof2} also converges to zero as $m\rightarrow \infty.$
As for the convergence of the remaining fourth term, involving the predictor process $U(\cdot)$ in \eqref{hist1:proof1}, we can follow a similar line of argument to obtain 
\begin{align}
    \label{hist1:proof6}
& ||\int_{t_m-\tau_1}^{t_m} \gamma_1(t_m-s,t_m) U(s)ds -  \int_{t-\tau_1}^{t} \gamma_1(t-s,t) U(s)ds ||_p \nonumber \\
\leq & \ ||\int_{t_m-\tau_1}^{t_m} (\gamma_1(t_m-s,t_m)-\gamma_1(t-s,t)) U(s)ds||_p \nonumber \\
& + \ || \int_{t_m-\tau_1}^{t_m} \gamma_1(t-s,t) U(s)ds - \int_{t-\tau_1}^{t} \gamma_1(t-s,t) U(s)ds||_p\nonumber\\
& \rightarrow 0, \text{ using assumptions \ref{ass:A3} and \ref{ass:B1}}.
\end{align}
Thus, $(E|\tilde{f}(Y,t_m) - \tilde{f}(Y,t)|^p)^{1/p} \rightarrow 0$, for a sequence $t_m \rightarrow t$ as $m \rightarrow \infty,$ implying the $p^{\text{th}}$-moment continuity of $\tilde{f}(Y,t)$ for all $t \in I$. The $p^{\text{th}}$-moment continuity of the process $\tilde{f}(Y,\cdot)$ follows by definition.
\end{proof}

As a consequence of the above result, we observe that $\tilde{f}(X,t)$ in \eqref{histIndex1:Calatayud2} is continuous in $t \in \mathcal{T} = [t_0,T]$ in the $\Lp$ sense, by simply choosing $b= T$.

\begin{proof}[Proof of Proposition \ref{prop:Lpsol:hist}]
Suppose $X$ is an $\Lp$ solution of \eqref{histIndex1:Calatayud2}. Then by definition, \ref{cond:a} and \ref{cond:c} hold. Since $X$ is $p^{\text{th}}$-moment continuous (see assumption \ref{ass:B2}) and the process $\tilde{f}(X,\cdot)$ is $\Lp$-continuous on $\mathcal{T} = [t_0,T]$, it follows from \cite{soon:73} page 101(1) that $X'$ is $p^{\text{th}}$-moment continuous and $p^{\text{th}}$-moment Riemann integrable on $\mathcal{T}$. Thus from the Fundamental Theorem of $\Lp$ Calculus (\citep{soon:73} page 104(6))
\begin{align*}
    X(t) &= g(t_0) + (\Lp) \int_{t_0}^{t} X'(s) ds
    = g(t_0) + \int_{t_0}^{t} \tilde{f}(X,s)ds,
\end{align*}
and condition \ref{cond:b} holds.
On the other hand, if conditions \ref{cond:a}-\ref{cond:c} are true, $X$ is indeed an $\Lp$ solution of \eqref{histIndex1:Calatayud2},  since the process $\tilde{f}(X,\cdot)$ was shown to be continuous. Thus $\tilde{f}(X,\cdot)$ is $p^{\text{th}}$-moment Riemann integrable on $\mathcal{T}$ (\citep{soon:73} page 101 (1)), and $\int_{t_0}^{t} \tilde{f}(X,s)ds$ is well defined in the $\Lp$ sense. From the result on page 103 (5) of \cite{soon:73}, it follows that $X$ is $p^{\text{th}}$-moment  differentiable on $\mathcal{T}$, with $X'(t) = \tilde{f}(X,t).$ Hence the proposition follows.
\end{proof}

\begin{proof}[Proof of Theorem \ref{thm2}]
From the history index model in \eqref{histIndex1:Calatayud2}
\begin{equation}
\begin{aligned} X'(t) = \tilde{f}(X, t)=&\alpha(t)+\int_{t-\tau_0}^t\gamma(t-s,t)X(s)ds\\&+ \int_{t-\tau_1}^{t} \gamma_1(t-s,t) U(s)ds +Z(t).
\end{aligned}
\nonumber
\end{equation}
For $k(t)=\int_{t-\tau_0}^t|\gamma(t-s,t)|ds$, since $\gamma$ is continuous on a compact domain, hence uniformly bounded, we have $k\in L^1(\mathcal{T})$.
Following the arguments outlined in the proof of Theorem 2.2 in \cite{cala:19}, this implies  $\lim_{t\rightarrow t_0^+}\int_{t_0}^tk(s)ds=0$. Thus, we can choose $\alpha>t_0$ such that, for all $t\in \mathcal{T}_{\alpha}=[t_0, \alpha]\subset \mathcal{T}, \int_{t_0}^tk(s)ds\leq 1/2.$ Consider the vector space
$$\mathcal{A}=\{X: [t_0-\tau_0, \alpha]\rightarrow \mathcal{L}^p\text{ continuous}, X(t)=g(t)\text{ on }[t_0-\tau_0, t_0]\},$$
with  norm
$\Vert X\Vert_{\mathcal{A}}=\sup_{t\in[t_0-\tau_0, \alpha]}\Vert X(t)\Vert_p.$
We note that $\Vert X\Vert_{\mathcal{A}}$ is well-defined because, by the $p^{\text{th}}$-moment continuity of $X$, the real map $t \rightarrow\Vert X(t)\Vert_p,\ t \in[t_0-\tau_0, \alpha]$ is continuous. Therefore $\underset{t\in[t_0-\tau_0, \alpha]}{\sup}\ \Vert X(t)\Vert_p<\infty$.

As shown in the proof of Theorem 2.2 of \cite{cala:19}, $\mathcal{A}$ is a Banach space. Consider the map $\Lambda: \mathcal{A}\rightarrow\mathcal{A}$ such that

$$\Lambda(X)(t)=\begin{cases}g(t_0)+(\Lp)\int_{t_0}^t\tilde{f}(X, s)ds, & t\in \mathcal{T}_{\alpha}=[t_0, \alpha],\\g(t), & t\in[t_0-\tau_0, t_0].\end{cases}$$

\noindent If $X\in\mathcal{A}$, it follows by taking $b= \alpha$ in Proposition \ref{prop:cont:hist} that the process $\tilde{f}(X, t)$ is $p^{\text{th}}$-moment continuous over $t \in \mathcal{T}_\alpha$. From arguments on  p. 103 (5) of \cite{soon:73}, the $p^{\text{th}}$-moment Riemann integral $(\Lp)\int_{t_0}^t\tilde{f}(X, s)ds$ is continuous. Also, the initial condition $g(\cdot)$ is continuous on $[t_0-\tau_0,t_0]$. Thus $\Lambda X: [t_0-\tau_0, \alpha]\rightarrow \Lp$ is continuous on $[t_0-\tau_0, \alpha]$ satisfying $\Lambda X(t)=g(t)$ on $[t_0-\tau_0, t_0]$. By definition $\Lambda X \in \mathcal{A}$, so $\Lambda$ is well defined.

Since from Proposition \ref{prop:Lpsol:hist}, $X: \mathcal{T}_{\alpha}\rightarrow\Lp$ is a solution of equation \eqref{histIndex1:Calatayud2} if and only if $X\in\mathcal{A}$ and $\Lambda X=X$, by the Banach fixed-point theorem, it suffices to check that $\Lambda$ is a contraction. Let $X, Y\in\mathcal{A}$ and $t\in \mathcal{T}_\alpha$  and observe that if $t\in[t_0-\tau_0, t_0]$ then  $\Lambda X(t)-\Lambda Y(t)=g(t)-g(t)=0$). From arguments on p. 102 (3) of \cite{soon:73}
\begin{equation}
    \begin{aligned}
    \Vert\Lambda X(t)-\Lambda Y(t)\Vert_p&=\left\Vert(\Lp)\int_{t_0}^t(\tilde{f}(X, s)-\tilde{f}(Y, s))ds\right\Vert_p\\&\leq\int_{t_0}^t\Vert \tilde{f}(X, s)-\tilde{f}(Y, s)\Vert_pds\\&=\int_{t_0}^t\left\Vert \int_{s-\tau_0}^s\gamma(s-u,s)(X(u)-Y(u))du\right\Vert_pds\\&\leq \int_{t_0}^t\int_{s-\tau_0}^s\vert\gamma(s-u,s)\vert\Vert X(u)-Y(u)\Vert_pduds\\&\leq\int_{t_0}^t\int_{s-\tau_0}^s\vert\gamma(s-u,s)\vert duds\Vert X-Y\Vert_{\mathcal{A}}\\&=\int_{t_0}^tk(s)ds\Vert X-Y\Vert_{\mathcal{A}}\\&\leq\frac{1}{2}\Vert X-Y\Vert_{\mathcal{A}}.
    \end{aligned}
    \nonumber
\end{equation}
Taking the supremum on $t\in [t_0-\tau_0, \alpha]$, $\Vert\Lambda X-\Lambda Y\Vert_{\mathcal{A}}\leq \frac{1}{2}\Vert X-Y\Vert_{\mathcal{A}}$. Therefore $\Lambda$ is a contraction.
The remainder of the proof follows similar arguments as in the proof of Theorem 2.2 of \cite{cala:19}.
\end{proof}
{
\begin{proof}[Proof of Corollary \ref{cor1}]
Similar to \eqref{def:f:hist}, define $\tilde{h}: \Lp \times I_b \rightarrow \mathbb{R}$ such that for an $\Lp$-continuous function $Y(\cdot) : [t_0-\tau_0,b]\rightarrow \mathbb{R}$ and $t\in I_b := [t_0,b]$, $t_0<b,$
\begin{align*}
\tilde{h}(Y,t)&= \alpha(t) + \ \int_{t-\tau_0}^{t}\gamma(t-s,t) Y(s)ds +\sum_{\pidx = 1}^{\nprdt} \slope_{\pidx}(\tm) \prdt_{\pidx}(\tm-\lag_{\pidx})  +Z(t).
\end{align*}
Here $\tilde{h}$ is a random functional with first argument given by  trajectories $\{Y(s) : s \in [t_0-\tau_0,b]\}$ and  second argument  $t \in [t_0,b]$. Note that $\tilde{h}$ constitutes a distributional delay term on $Y$ and a discrete concurrent delay on the predictors $U_j$. Thus
\begin{align*}
&||\tilde{h}(Y,t_m) - \tilde{h}(Y,t)||_p \nonumber\\
\leq & |\alpha(t_m) -\alpha(t)| + ||Z(t_m)- Z(t)||_p \nonumber \\
& \ + ||\int_{t_m-\tau_0}^{t_m}\gamma(t_m-s,t_m) Y(s)ds -  \int_{t-\tau_0}^{t}\gamma(t-s,t) Y(s)ds ||_p \nonumber \\ 
& \ + ||\sum_{j=1}^J \beta_{j}(t_m)U_j(t_m-\tau_{j}) -  \beta_{j}(t)U_j(t-\tau_{j})||_p.
 \end{align*}
By similar arguments as in the proof of the continuity of $f$  in Appendix A and the proof of Proposition \ref{prop:cont:hist} in Appendix B, the continuity of $\tilde{h}$ in the $\Lp$ sense in $t$ follows. The existence and uniqueness follows by the same arguments as  in the proof of  Theorem \ref{thm2}.
\end{proof}}

\section{Derivative estimation}
\label{sec:derivestappendix}


For derivative estimation,  we employ the local polynomial estimator for derivatives, motivated by local polynomial approximation \citep{mull:87:4, fan:96}, which leads to the weighted least squares estimates $\hat{\theta}_l$ that correspond to  solving 
\begin{equation}
\argmin_{\theta_l\in\mathbb{R}}\sum_{k=1}^K\{Y_k-\sum_{l=0}^L\theta_l(t_k-t)^l\}^2K_h(t_k-t),
\end{equation}
where $K$ is a kernel function with $K_h(\cdot)=K(\cdot/h)/h$ and $h$ is a tuning parameter.  A well-studied estimator for the $\nu$-th order derivative $x^{(\nu)}(t)$ is given by
\begin{equation}
\hat{x}^{(\nu)}(t)=\nu!\hat{\theta}_{\nu},
\end{equation}
for $\nu=0, 1, \dots, L$. The whole curve $\hat{x}^{(\nu)}(\cdot)$ is obtained by running the above local polynomial regression with $t$ varying in an appropriate estimation domain. 

For local polynomial fitting $L-\nu$ preferably is taken to be odd as shown in \citet{ruppert1994multivariate} and \citet{fan:96}.
To obtain an estimate of the  first  derivative, i.e., for $\nu=1$,  the choice $L=\nu+1=2$  leads to the so-called local quadratic regression and the derivative estimate $\hat{x}^\prime(t)$ is given by the local slope $\hat{\theta}_1$.
Other common methods for derivative estimation can be based 
on smoothing splines or B-splines \citep{rice:83, zhou:00}. 
An alternative method is based on difference quotients, which provides a straightforward approach for pointwise estimation of derivatives. Difference quotient-based estimators have been thoroughly studied in the context of human growth curves in the nonparametric regression literature \citep{mull:87, mull:87:4, gass:84}. 

\section*{References}

\unappendix

\bibliography{covidDEmodeling}

\end{document}